\documentclass[12pt]{article}

\usepackage[top=1in,left=0.9in]{geometry}
\usepackage{amsmath,amsthm,amssymb}
\usepackage{enumerate, xcolor}
\usepackage{graphicx}
\usepackage{ifsym}
\usepackage{fancybox}
\usepackage{adjustbox }
\usepackage{lscape}
\usepackage{afterpage}
\usepackage{cancel}
\usepackage{mathtools} 
\usepackage{tikz}
\usepackage{hyperref}
\usepackage{xcolor}

\DeclareMathOperator*{\argmax}{arg\,max}
\usetikzlibrary{calc}
\colorlet{ColorGray}{blue!5}
\colorlet{ColorGreen}{green!70}

\usetikzlibrary{shapes.geometric}
\newcommand{\Stars}[2][fill=yellow,draw=yellow]{\begin{tikzpicture}[baseline=-0.35em,#1]
\foreach \X in {1,...,5}
{\pgfmathsetmacro{\xfill}{min(1,max(1+#2-\X,0))}
\path (\X*1.2em,0) 
node[star,draw,star point height=0.25em,minimum size=1em,inner sep=0pt,
path picture={\fill (path picture bounding box.south west) 
rectangle  ([xshift=\xfill*1em]path picture bounding box.north west);}]{};
}
\end{tikzpicture}}

\newtheorem{thm}{Theorem}
\newtheorem{lem}[thm]{Lemma}
\newtheorem{prop}[thm]{Proposition}
\newtheorem{cor}[thm]{Corollary}

\theoremstyle{definition}

\newtheorem{rem}{Remark}
\newtheorem*{ex}{Example}

\def\lf{\left\lfloor}   
\def\rf{\right\rfloor}
\def\lc{\left\lceil}   
\def\rc{\right\rceil}

\setlength\parindent{0pt}
\setlength{\parskip}{1em}


\begin{document}

\title{\textsc{No Feedback? No Worries!}\\ The art of guessing the right card}
\author{Tipaluck Krityakierne$^1$ \and Thotsaporn Aek Thanatipanonda$^{2}$}
\date{%
    \footnotesize{$^1$Department of Mathematics, Faculty of Science, Mahidol University, Bangkok, Thailand\\%
    $^2$Science Division, Mahidol University International College, Nakhon Pathom, Thailand}\\
}

\maketitle

\begin{abstract}
In 1998, Ciucu published ``No-feedback card guessing for dovetail shuffles'', an article which gives the optimal guessing strategy for $n$ cards ($n$ even) after $k$ riffle shuffles whenever $k>2\log_{2}\left(n\right)$. We discuss in this article the optimal guessing strategy and the asymptotic (in $n$) expected number of correct guesses for any fixed $k\geq1$. This complements the work achieved two decades ago by Ciucu. \vspace{1em}

\noindent {\bf Keywords:} card guessing; no feedback; optimal strategy; discrete probability; generating function.
\end{abstract}

\section*{The $\cancelto{\bf{math}}{\bf{art}}$ \hspace{1em} of guessing it right}
Over the past several decades, there have been studies and established
theories attempting to tackle many aspects of the \textsc{Card Guessing
Game}, e.g. finding the optimal or worst case strategies, identifying the distribution and the expected number of correct guesses, etc. You might have seen a dealer performing ``riffle shuffles'', perhaps at the Vegas casinos or at the basement poker nights.  Before play can begin, the cards have to be shuffled multiple times to mix the cards randomly.  \cite{BD} proved the famous result that after giving the deck seven riffle shuffles, all the 52 cards will be equally likely to be in one position as in another. 

\vfill
\begin{flushright}
\includegraphics[scale=0.35]{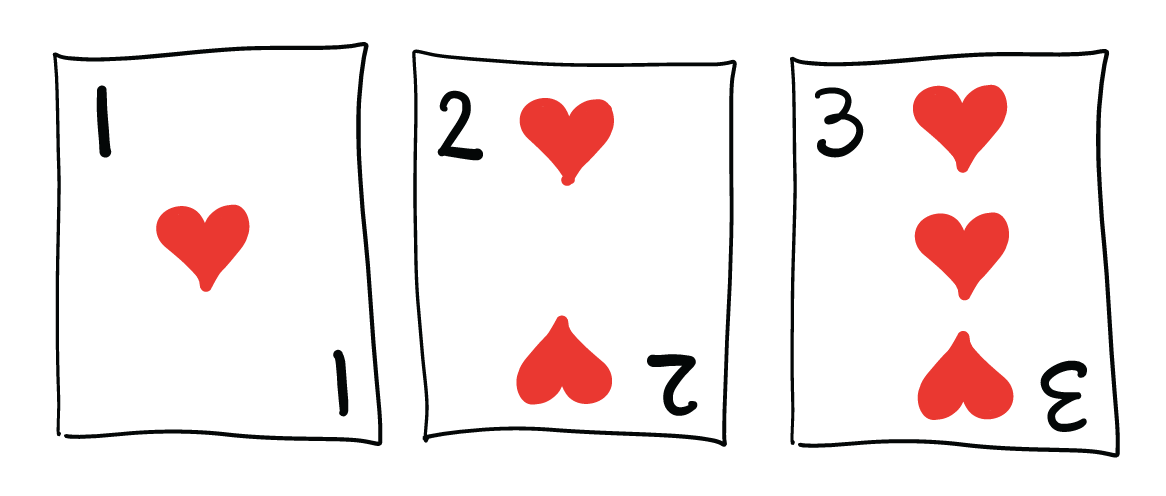}
\end{flushright}

\subsubsection*{\textsc{Card Guessing Game!}}\vspace{-1em}
\begin{flushleft}
\shadowbox{\begin{minipage}[t]{1\columnwidth}%
\begin{footnotesize}
\textsc{Start:} To start, you will need:
\begin{itemize}
\item A dealer and a player
\item 1 deck of $mn$ cards ordered $\underbrace{1,...,1}_{m \text{ copies}}, \dots, \underbrace{n,...,n}_{m \text{ copies}}$
\vspace{1em}
\end{itemize}
\textsc{How to Play?}
\begin{itemize}
\item The dealer shuffles the deck according to some \textsc{Shuffling Rule}
$\mathcal{R}$. 
\item Cards are then dealt from the top, and the player guesses each card
one at a time. 
\item Play continues until the entire deck has been gone through. \vspace{1em}
\end{itemize}
\textsc{Scoring:} The player earns 1 point for each correct guess.
\vspace{1em}

\textsc{Feedback:} The player is given \textsc{Feedback} $\mathcal{F}\in$
\{\textsc{Complete, Partial, No}\} after each guess. \vspace{1em}

\textsc{Object of the Game:} The object of the game is to earn as
many points as possible.%
\end{footnotesize}
\end{minipage}}
\par\end{flushleft}

Let us take a look at a quick guide of how the game is played. As shown above, the player may or may not receive feedback from the dealer depending on the rule. The complete feedback game is similar to blackjack for which the dealer reveals the card after each play (guess). The player thus can adjust their strategy
of play, to improve his odds of guessing the next cards correctly, according to the cards they have already seen. For the no-feedback game, however, as no feedback is given, the player has to guess all the cards beforehand. To increase the probability of guessing the correct number,  it is possible to guess the same card number for cards in different positions. Summary of terminologies and definitions related to the card guessing games are now given.

{\bf{Terminologies} }
\begin{itemize}
\item \textsc{Level of feedback:} The player in the \textsc{No Feedback
}variant is informed nothing about the correctness of his or her guess
after each guess. In the \textsc{Complete} \textsc{Feedback} variant,
on the other hand, the value of the card will be revealed to the player
after each guess. Finally, in the \textsc{Partial Feedback }variant
(also known as \textsc{Yes/No Feedback}), the player is informed whether
his or her guess was correct after each guess, but the value of the
card will not be revealed in case of incorrect guesses. 
\item \textsc{Card shuffling:} If \textsc{Uniform Shuffle} is applied, the
shuffled deck starts in a randomly shuffled state. Applying \textsc{Top
to Random Shuffle} means the dealer takes the top card and places
it back into the deck at position $i$ with probability $p_{i}$.
For \textsc{Riffle Shuffle} (also known as \textsc{GSR Shuffle} or
\textsc{Dovetail Shuffle}), the dealer cuts the deck into two piles
before interleaving the two back into one. Beginning with an ordered
set (1 increasing subsequence), a single riffle shuffle will lead to a permutation
with at most 2 increasing subsequences. The parameter $k$ for the riffle
and top to random shuffles represents the number of times the deck
has been shuffled. In particular, $k=1$ means a single shuffle. 
\item \textsc{Optimal (Best) versus Optimal Mis\`ere Strategy:} A given
strategy is said to be \textsc{Optimal (or Best) Strategy} if the
strategy achieves the maximum expected number of correct guesses,
where the maximum is taken over all strategies. \textsc{Optimal Mis\`ere
(or Worst-case) Strategy,} which achieves the minimum expected number,
is defined analogously \cite{DGS}.
\end{itemize}

We summarize the game variants in the literature,
characterized by the shuffling rules, the level of feedback, and the number
of copies of each card type in Table \ref{tab:relater work}. 

\begin{table}[ht]

{\small{}\caption{Variants of the \textsc{Card Guessing Game} \label{tab:relater work}}
}{\small\par}

{\footnotesize{}}%
\begin{tabular}{|l|l|c|l|l|}
\hline 
{\footnotesize{}Authors} & {\footnotesize{}Shuffling ${\cal R}$} & {\footnotesize{}\#Copies} & {\footnotesize{}Feedback ${\cal F}$} & {\footnotesize{}Contribution}\tabularnewline
\hline 
{\footnotesize{}\cite{DG} } & {\footnotesize{}uniform} & {\footnotesize{}1, $m$} & {\footnotesize{}complete ($m>1$),} & {\footnotesize{}distribution of the number }\tabularnewline
 &  &  & {\footnotesize{}partial ($m=1$)} & {\footnotesize{}of correct guesses,}\tabularnewline
 &  &  &  & {\footnotesize{}optimal and mis\`ere strategies}\tabularnewline
 \hline 
{\footnotesize{}\cite{DGS} } & {\footnotesize{}uniform} & {\footnotesize{}$m$} & {\footnotesize{}partial} & {\footnotesize{}expected number of}\tabularnewline
 {\footnotesize{}and \cite{DGHS} } &  &  &  & {\footnotesize{}correct guesses}\tabularnewline
 \hline 
{\footnotesize{}\cite{BD}} & {\footnotesize{}$k$- riffle } & {\footnotesize{}1} & {\footnotesize{}complete } & {\footnotesize{}a sharp bound on the number }\tabularnewline
 &  &  &  & {\footnotesize{}of shuffles to random deck}\tabularnewline
 \hline 
{\footnotesize{}\cite{C}} & {\footnotesize{}$k$- riffle } & {\footnotesize{}1} & {\footnotesize{}no } & {\footnotesize{}best strategy for $k=1$}\tabularnewline
 &  &  &  & {\footnotesize{}and $k>2\log_{2}\left(n\right)$, $n$ even,}\tabularnewline
 &  &  &  & {\footnotesize{}expected number of correct guesses}\tabularnewline
 \hline 
\textbf{\footnotesize{}\textifsymbol[ifgeo]{100} Present work} & {\footnotesize{}$k$- riffle} & {\footnotesize{}1} & {\footnotesize{}no } & \footnotesize{}best strategy for any fixed $k\geq1$,\tabularnewline
 &  &  &  & {\footnotesize{}expected number of correct guesses}\tabularnewline
 \hline 
{\footnotesize{}\cite{P}} & {\footnotesize{}$k$- top to} & {\footnotesize{}1} & {\footnotesize{}no } & {\footnotesize{}best strategy for $k>4n\log(n)+cn$,}\tabularnewline
 &  {\footnotesize random}&  &  & {\footnotesize{}$n$ even}\tabularnewline
\hline 
{\footnotesize{}\cite{L}} & {\footnotesize{}$1$- riffle } & {\footnotesize{}1} & {\footnotesize{}complete} & {\footnotesize{}expected number of}\tabularnewline
 &  &  &  & {\footnotesize{}correct guesses}\tabularnewline
 \hline 
{\footnotesize{}\cite{KT}} & {\footnotesize{}$1$- riffle } & {\footnotesize{}1} & {\footnotesize{}complete} & {\footnotesize{}(refined) expected number }\tabularnewline
 &  &  &  & {\footnotesize{}of correct guesses, higher moments}\tabularnewline
\hline 
\end{tabular}{\footnotesize\par}
\end{table}
\par


{\bf Our contribution}

As specified in the table, our contribution in this paper builds on and fills gaps of the no-feedback card guessing game with riffle shuffles in the literature. Specifically, \cite{C} proved that, given a deck of $n$ cards ($n$ even), the best no-feedback guessing strategy after
$k$ riffle shuffles when $k>2\log_{2}\left(n\right)$ is to guess
card number 1 for the first half and card number $n$ for the second half of the
deck. The expected number of correct guesses in this case is therefore at most 2.
By using this strategy, Ciucu showed that after shuffling the cards the number of order $\log_{2}\left(n\right)$ times, the deck is well mixed, i.e. close to randomly distributed
deck. In addition, Ciucu also gave the strategy for the other extreme
case, $k=1$. In this work, we focus exclusively on the analysis
of games for a general but finite value of $k\geq1$, which complements work
by \cite{C} in the no-feedback card guessing game with riffle shuffle
variant.

\subsection*{Time to play}

Let us consider now a strategy of guessing the cards after the deck is riffle-shuffled $k$-times, for $k \geq 1$, following these rules:
\begin{itemize}
\item At the beginning, a deck of $n$ cards is ordered $1, 2, 3,\dots, n$.
\item The dealer riffle shuffles the deck $k$ times. 
\item No feedback is given after each guess.
\end{itemize}

{\bf Let's start with $k=1$, a single riffle shuffle}

Take out a deck of $n$ cards, cut it into two piles (possible to have 0 cards in one of the piles), and interleave the cards from two piles in any possible way. What are all possible permutations you can generate? 

\begin{figure}[ht]
\centering
\includegraphics[scale=0.3]{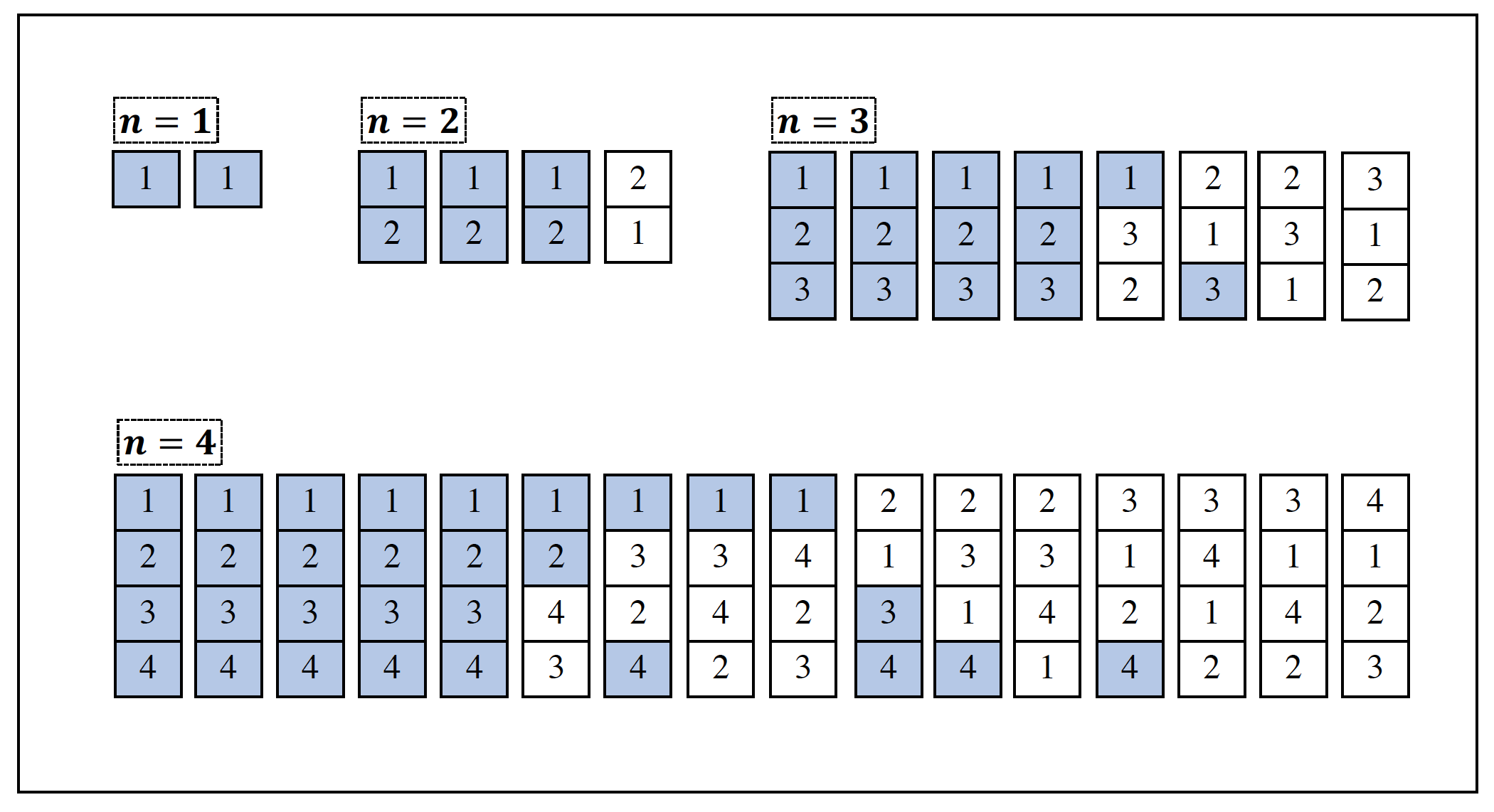}
\caption{\label{fig:permutations}Examples of all possible permutations after 1-time riffle shuffling for $n=1,2,3,4$. For each card position, the color indicates the most likely number.}
\end{figure}

For instance, when $n=1$, $\{1\}$ is the only possible permutation, but with multiplicity 2, generated from the piles $\left(\{1\},\{ \}\right)$ or $\left(\{ \},\{1\}\right)$. In general, there are $2^n$ possibilities, among which $n+1$ are the identity permutation (having one increasing subsequence), and the others are of two increasing subsequences (with multiplicity 1). We give a few examples in Figure \ref{fig:permutations}. For each card position, the color indicates the most likely number to show up in that position.

{\bf What number is most likely to show up in position $i$?}

We record the frequency of  a card in position $i$ being number $j$ in the matrix $M= \left(m_{i,j}\right).$ Then, the row $i$ of the matrix corresponds to position $i$ of the card.  The column $j$ of the matrix corresponds to card number $j$.
For example $m_{1,3}$ gives the number of times the top card is 3.

When $n=4$, the matrix $M$ is given by

\[ M = \left[\begin{array}{cccc}
\fbox{\bf9} & 3 & 3 & 1 \\
4& \fbox{\bf6}& 4& 2 \\
2& 4& \fbox{\bf6}& 4\\ 
1& 3& 3& \fbox{\bf9}\\
\end{array}\right] . \]

{\bf Probability matrix for $k=1$ shuffle, $P^{(1)}$}

For a fixed $n$, we define a probability matrix for $k=1$ shuffle, $P^{(1)}:=M/2^n$. Let $a(i,j,n):= \dfrac{m_{i,j}}{2^n}$ be the $(i,j)$ element of $P^{(1)}$.  Then, $a(i,j,n)$ represents the probability of a card in position $i$ being number $j$. It turns out that $a(i,j,n)$ has an exact formula:

\begin{equation}
\label{eq:a1}
a(i,j,n) = \dfrac{1}{2^i}\binom{i-1}{j-1}+\dfrac{1}{2^{n-i+1}}\binom{n-i}{j-i}. 
\end{equation}

This formula was mentioned and proved earlier in \cite{C}.
We notice that  the first term is non-zero when $i \geq j$ and
the second term is non-zero when $j \geq i.$
The row sum and column sum are both equal to $1,$
i.e. $P^{(1)}$ is a \textit{doubly stochastic matrix}.
This is easy to show in one direction, but not so easy to show in the other direction:
\[ \sum_{j=1}^n a(i,j,n) 
= \dfrac{1}{2^{i}}\sum_{j=1}^i\binom{i-1}{j-1}+\dfrac{1}{2^{n-i+1}}\sum_{j=i}^n\binom{n-i}{j-i}
= \dfrac{1}{2}+\dfrac{1}{2}= 1,  \]
and
\[ \sum_{i=1}^n a(i,j,n) 
= \sum_{i=j}^n \dfrac{1}{2^i}\binom{i-1}{j-1}+\sum_{i=1}^j \dfrac{1}{2^{n-i+1}}\binom{n-i}{j-i}
=1.  \]

Also, the symmetic property is easily observed through the matrix $P^{(1)}$: 
\begin{equation} 
\label{eq:sym}
a(i,j,n) = a(n+1-i,n+1-j,n).
\end{equation}

\textbf{Guessing strategy exists. Do not blind guessing!}

As we now have a formula for the probability that each number $j$ shows up in the position $i$ stored in the matrix $P^{(1)}$, it is possible to find the {\it optimal guessing strategy}
that gives the maximum expected number of correct guesses. 
Since the first row of $P^{(1)}$ corresponds to the top card, the second row the second card, and so on, it is easily shown that one should pick the index of the largest value from each row $i$  as a best guess for the card in the position $i$, for $i=1, \dots, n$. 

\textbf{The expected number of correct guesses}

Let $X^{\cal{G}}$ be a random variable representing the number of correct guesses of an $n$-card deck if the player uses strategy $\cal{G}$. 
To find the expectation $E\left[X^{\cal{G}}\right]$, we consider 
\[
X^{\cal{G}}_{i}=\begin{cases}
\begin{array}{c}
1\\
0
\end{array} & \begin{array}{c}
\text{if the player guesses the }i\text{th position correctly}\\
\text{otherwise.}
\end{array}\end{cases}
\]

Since
\begin{align*}
E\left[X^{\cal{G}}\right]& =\sum_{i=1}^{n}E\left[X^{\cal{G}}_{i}\right]=\sum_{i=1}^{n}P\left(X^{\cal{G}}_{i}=1\right)\leq\sum_{i=1}^{n}\max_{1\leq j_i\leq n} a(i,j_i,n),
\end{align*}
the optimal strategy can be defined as follows.

\vspace{-1em}
\begin{flushleft}
\shadowbox{\begin{minipage}[t]{1\columnwidth}%
{\bf \textsc{Optimal guessing strategy $\cal{G}^*$}}\\
For a fixed row $i$ ($1\leq i\leq n$) of the matrix $P^{(1)}$, let $j_i^*:=\argmax_j a(i,j,n)$.

Under the optimal guessing strategy $\cal{G}^*$, the player guesses number $j_i^*$ for the card position $i$, and in this case, \[E\left[X^{\cal{G}^*}\right]=\sum_{i=1}^{n} a\left(i,j^*_i,n\right).\]
\end{minipage}}
\par\end{flushleft}

For example, when $n=4$, the largest number from each row $i$ of the matrix $M$ (those numbers having a small square around) tells us that  $j_1^*=1$, $j_2^*=2$, $j_3^*=3$, and $j_4^*=4$. Thus, the optimal strategy is to guess number 1, 2, 3, 4, respectively. 

In general, as we shall see later in our main theorem (Theorem \ref{thm:main}, in its simplest case) that for a 1-time shuffled deck, the optimal strategy is to guess the top half of the deck with sequence 
\[ 1,2,2,3,3,4,4,\dots\]
and guess the bottom half in the reverse manner, i.e.
\[ \dots, n-3, n-3, n-2, n-2, n-1, n-1, n. \]

To end the section, we restate a result from \cite{C} regarding the value of $E\left[X^{\cal{G}^*}\right]$ for $k=1$. The main result of our paper is to obtain similar results, but for any $k\geq 1$ times riffle shuffles. 

\begin{thm}[Ciucu, 1998]
\label{thm:Ciucu}
For a deck of $n$ cards, given that the no-feedback guessing of the 
whole deck after 1-time shuffle goes according to the best strategy, 
the average number of correct guesses is $\dfrac{2\sqrt{n}}{\sqrt{\pi}}+
\mathcal{O}(1).$
\end{thm}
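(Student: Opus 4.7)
The plan combines the explicit formula \eqref{eq:a1}, the symmetry \eqref{eq:sym}, and Stirling's approximation. First I would pin down $j_i^*$ on the top half $1\le i\le n/2$. The two summands in \eqref{eq:a1} have essentially disjoint supports in $j$ (the first is nonzero only for $j\le i$ and the second only for $j\ge i$, overlapping at the single index $j=i$ with combined value $2^{-i}+2^{-(n-i+1)}$), so the maximum of $a(i,\cdot,n)$ is the larger of the two central-binomial peaks
\[
\frac{1}{2^i}\binom{i-1}{\lfloor (i-1)/2\rfloor}\qquad\text{and}\qquad\frac{1}{2^{n-i+1}}\binom{n-i}{\lfloor (n-i)/2\rfloor}.
\]
A short Stirling comparison shows the first dominates whenever $i\le n/2$, so
\[
a(i,j_i^*,n)=\frac{1}{2^i}\binom{i-1}{\lfloor (i-1)/2\rfloor},
\]
with $j_i^*$ at one of the two central indices $\lceil i/2\rceil$, $\lfloor i/2\rfloor+1$ --- precisely the pattern $1,2,2,3,3,4,4,\ldots$ promised just before the theorem statement.

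Next, by the symmetry \eqref{eq:sym} the bottom-half contributions ($i>n/2$) duplicate the top-half contributions exactly, which gives
\[
E\!\left[X^{\mathcal{G}^*}\right]\;=\;2\sum_{i=1}^{n/2}\frac{1}{2^i}\binom{i-1}{\lfloor (i-1)/2\rfloor}.
\]
To estimate this sum, Stirling yields $\binom{2m}{m}/2^{2m}=\frac{1}{\sqrt{\pi m}}\bigl(1+O(1/m)\bigr)$, so that the summand equals $\frac{1}{\sqrt{2\pi(i-1)}}\bigl(1+O(1/i)\bigr)$ for $i\ge 2$; combined with the standard Euler--Maclaurin estimate $\sum_{k=1}^{N}k^{-1/2}=2\sqrt{N}+\mathcal{O}(1)$ this produces
\[
\sum_{i=1}^{n/2}\frac{1}{2^i}\binom{i-1}{\lfloor (i-1)/2\rfloor}\;=\;\sqrt{\frac{n}{\pi}}+\mathcal{O}(1),
\]
and doubling delivers the advertised $\dfrac{2\sqrt{n}}{\sqrt{\pi}}+\mathcal{O}(1)$.

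The main obstacle is the first step: verifying that the argmax really lives in the ``first-term'' region for every $i\le n/2$, in particular on the borderline rows close to $i=n/2$ where the two central-binomial peaks are nearly tied and where the overlap value at $j=i$ must be ruled out. Each of these comparisons reduces to a clean inequality between central binomial coefficients; since any $O(1)$-many ``ambiguous'' rows contribute $O(1/\sqrt{n})$ apiece, they are safely absorbed into the $\mathcal{O}(1)$ error term and do not disturb the leading constant $2/\sqrt{\pi}$.
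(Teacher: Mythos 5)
Your argument is correct, and for $k=1$ it is actually tighter than what the paper does. Note first that the paper itself does not reprove this statement --- it quotes it from Ciucu and instead proves the general-$k$ analogue (Theorem \ref{thm:main}); specialized to $k=1$, that proof uses exactly your decomposition, since the two terms of \eqref{eq:a1} are $\tfrac12 p_{[1]}$ and $\tfrac12 p_{[2]}$ in the paper's notation. Where you differ is in execution: the paper identifies each component as a (shifted) binomial via its generating function, invokes the CLT to read off the peak height as $\tfrac{1}{\sigma_t\sqrt{2\pi}}$ with $\sigma_1^2=(i-1)/4$, and then argues the components are asymptotically non-overlapping; you instead apply Stirling directly to the central binomial coefficient (giving the same $\tfrac{1}{\sqrt{2\pi(i-1)}}(1+O(1/i))$ per row, hence an honestly summable error) and exploit the fact that for $k=1$ the two supports are \emph{exactly} disjoint except at $j=i$, where the overlap contributes only $2^{-i}+2^{-(n-i+1)}$ and hence $O(1)$ in total. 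This makes your version elementary and fully rigorous without any appeal to normal approximation or to a qualitative ``almost non-overlapping'' step, at the cost of not generalizing to $k>1$, where the components genuinely overlap and the paper's generating-function/CLT machinery earns its keep. The one point you flag as an obstacle --- the borderline rows near $i=n/2$ --- is handled correctly: for $2\le i\le n/2$ the first peak sits at $j_i^*\le i-1$ where the second term vanishes, and $\max_{j\ge i}a(i,j,n)\le 2^{-i}+\text{(second peak)}\le 2^{-i}+\text{(first peak)}$, so the row maximum equals the first peak up to a summable $2^{-i}$ error; no row-by-row tie-breaking is actually needed.
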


\section{Preview: the $k$-time riffle shuffles}
\vspace{-4.5em}
\begin{flushright}
\begin{tikzpicture}
\node [anchor=west] at (1.3,.7) {\bf\scriptsize{0\%}};
\draw [fill=ColorGray] (0,0) rectangle (2,.5);
\draw [fill=ColorGreen] (0,0) rectangle (0,.5);
\end{tikzpicture}
\end{flushright}

We now turn to a general case in which a deck of $n$ cards is given $k>1$ times riffle shuffles. Our goal is to obtain a similar statement to Theorem \ref{thm:Ciucu} for asymptotic $n$. 

{\bf Probability matrix for general $k$ shuffles, $P^{(k)}$}

Analogous to $a(i,j,n)$ for the case $k=1$, we let $a_{i,j}^{(k)}$ denote the probability that the card number $j$ will show up at position $i$ after giving the deck $k$ riffle shuffles (where we now make $i,j$ subscripts, and suppress $n$ from $a_{i,j}^{(k)}$ for simplicity). The probabilty matrix $P^{(k)}$ for $k$ shuffles is subsequently defined by $\left(P^{(k)}\right)_{i,j}=a_{i,j}^{(k)}$.

Since the transpose matrix $\left(P^{(1)}\right)^T$ is a probability transition matrix, it follows that $P^{(k)}=\left(P^{(1)}\right)^k$, the $k$-th power of the matrix, and so

\begin{equation}
\label{eq:a_k}
a_{i,j}^{(k)} =  \sum_{s_1=1}^n \dots 
\sum_{s_{k-1}=1}^n a_{i,s_1}^{(1)}a_{s_1,s_2}^{(1)}\dots a_{s_{k-1},j}^{(1)}. 
\end{equation}

Similar to the case $k=1$, the expected number of correct guesses under the optimal strategy $\cal{G}^*$ is
\begin{equation}
\label{eq: EX}
E\left[X^{\cal{G}^*}\right] = \sum_{i=1}^n a_{i,j_i^*}^{(k)}.
\end{equation}

Thus, finding the optimal guessing strategy boils down to determining the optimal strategy $j_i^*$ which maximizes $a_{i,j}^{(k)}$ for each $i=1,\dots, n$.

{\bf Caution!}
Even though we have a closed-form expression for $a_{i,j}^{(1)}$ as given in Equation \ref{eq:a1}, simply plugging the closed-form formula in Equation \ref{eq:a_k} in attempt to solving for the maximizer of $a_{i,j}^{(k)}$ directly is unfortunately not tractable.

\newpage
Before we dive into the details of the proof, we give an overview of the proof procedure, together with the accompanying Figure \ref{fig:overview} depicting the p.m.f. $a_{60,j}^{(3)}$ (i.e. $i=60, k=3$) for $n=200$ cards. Here, we want to find the maximizer index $j_{60}^*$ of the p.m.f. In practice, the expression of the p.m.f. (the red line) is not tractable. We show later that the p.m.f. is indeed a mixture model with $2^k$ components, whose expressions are tractable via the generating functions.

{\bf \textsc{Proof procedure}}\vspace{-1em}
\begin{flushleft}
\shadowbox{\begin{minipage}[t]{1\columnwidth}%
\begin{small}
\textsc{Setup:}  We fix row $i$ of matrix $P^{(k)}$, and let $J_i$ be a random variable whose probability mass function (p.m.f.) is given by 
\[P(J_i=j)=a_{i,j}^{(k)}, \;\;\; {\text { for }} j=1,\dots, n.
\]

\textsc{Goal:}  Find the index $j_i^*$ that maximizes $P(J_i=j)$.\\

\textsc{Procedure:} The main steps in the proof proceed as follows:
\begin{enumerate}
\item Decompose the p.m.f. of $J_i$ into a mixture model with $2^k$ components: 
\[P(J_i=j)=\frac{1}{2^k}\sum_{t=1}^{2^k}p_{L_t}(j).\]
\item Define, for each $p_{L_t}$, the probability generating function (p.g.f.): 
\[F_{L_t}(x) = \sum_{j=1}^n p_{L_t}(j) \cdot x^j.\]
\item Recover  the distribution $p_{L_t}$ through the p.g.f. $F_{L_t}(x)$.
\item Find, for $n$ large,  the location and the height of the peak of $p_{L_t}$.
\item Show, for $n$ large, that the components $p_{L_1}, \dots, p_{L_{2^k}}$  of the mixture model  are (almost) non-overlapping.
\end{enumerate}

\vspace{1em}
\textsc{Maximizer:} The location of the highest peak among the $2^k$ components is the solution $j_i^*$.
\end{small}
\end{minipage}}
\par\end{flushleft}

\begin{figure}[h]
\centering
\includegraphics[scale=0.33]{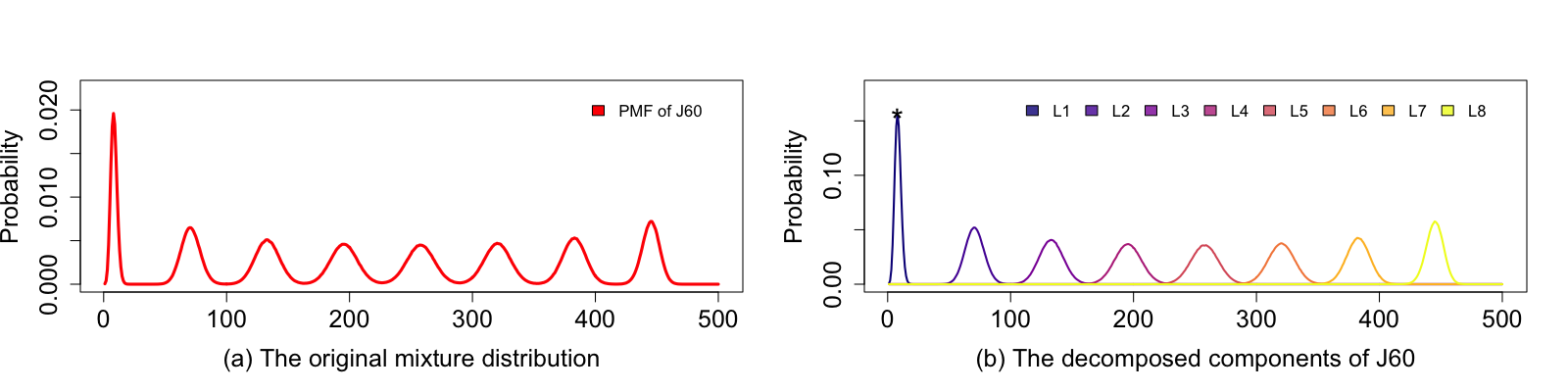}
\caption{\label{fig:overview} Example ($k=3$, $n=200$, $i=60$): (a) the p.m.f. of the mixture model $J_{60}$; \newline (b) the decomposed components of the mixture model,  $P(J_{60}=j)=\frac{1}{8}\sum_{t=1}^{8}p_{L_t}(j)$}
\end{figure}


\section{Game start: the generating function of $p_L$}
\vspace{-4.5em}
\begin{flushright}
\begin{tikzpicture}
\node [anchor=west] at (1.2,.7) {\bf\scriptsize{13\%}};
\draw [fill=ColorGray] (0,0) rectangle (2,.5);
\draw [fill=ColorGreen] (0,0) rectangle (0.25,.5);
\end{tikzpicture}
\end{flushright}

As outlined in the proof procedure, we let $i=1,\dots, n$ be fixed, and focus exclusively on the $i$th row of the matrix $P^{(k)}$ whose  p.m.f. is given by 
\[
P(J_i=j)=a_{i,j}^{(k)},\,\,\, j=1,\dots, n.
\]

We start by decomposing $a_{i,j}$  ($k=1$):
\begin{equation}
\label{eq:a1decomp}
a_{i,j} = \dfrac{1}{2}[b^{(1)}_{i,j} + b^{(2)}_{i,j}],
\end{equation}
where
\[ b^{(1)}_{i,j} :=\dfrac{1}{2^{i-1}}\binom{i-1}{j-1},  \qquad\qquad b^{(2)}_{i,j} := \dfrac{1}{2^{n-i}}\binom{n-i}{j-i}.\]

In general, for $ k \geq 1$, we decompose $a_{i,j}^{(k)}$ 
into the sum of $2^k$ possible products of $b_{i,j}^{(l)}$.
That is, we write 
\begin{equation}  
\label{eq:ak_decompose1}
a_{i,j}^{(k)} = \dfrac{1}{2^k} \sum_L p_L(j),
\end{equation}
where $L$ is the list of length $k$ whose entries are either 1 or 2, 

\begin{equation}
\label{eq:pL_def}
 p_L(j) :=  \sum_{s_{1}=1}^n \dots \sum_{s_{k-1}=1}^n b^{(L[1])}_{i,s_1}
b^{(L[2])}_{s_1,s_2}\dots b^{(L[k])}_{s_{k-1},j} , 
\end{equation}
and $L[l]$ refers to the $l$th element of $L$.

For example, 
for $k =2,$  
\[a_{i,j}^{(2)} = \dfrac{1}{4}\left[p_{[1,1]}(j)+ p_{[2,1]}(j)+ p_{[1,2]}(j)+ p_{[2,2]}(j)\right],\]
where
\begin{align*}
p_{[1,1]}(j) &= \sum_{s=1}^n b^{(1)}_{i,s}b^{(1)}_{s,j} 
= \sum_{s=1}^n \dfrac{1}{2^{i-1}}\binom{i-1}{s-1}\dfrac{1}{2^{s-1}}\binom{s-1}{j-1},  \\
p_{[2,1]}(j) &= \sum_{s=1}^n b^{(2)}_{i,s}b^{(1)}_{s,j} 
=\sum_{s=1}^n \dfrac{1}{2^{n-i}}\binom{n-i}{s-i}\dfrac{1}{2^{s-1}}\binom{s-1}{j-1},  \\
p_{[1,2]}(j) &= \sum_{s=1}^n b^{(1)}_{i,s}b^{(2)}_{s,j} 
=\sum_{s=1}^n \dfrac{1}{2^{i-1}}\binom{i-1}{s-1}\dfrac{1}{2^{n-s}}\binom{n-s}{j-s},  \\
p_{[2,2]}(j) &= \sum_{s=1}^n b^{(2)}_{i,s}b^{(2)}_{s,j} 
=\sum_{s=1}^n \dfrac{1}{2^{n-i}}\binom{n-i}{s-i}\dfrac{1}{2^{n-s}}\binom{n-s}{j-s}.  
\end{align*}

Another example for $k=3$, $L = [1,1,2]$ leads to
\[ p_L(j) =  \sum_{s_{1}=1}^n 
\sum_{s_2=1}^n b^{(1)}_{i,s_1}b^{(1)}_{s_1,s_2}b^{(2)}_{s_{2},j}
=\sum_{s_{1}=1}^n \sum_{s_2=1}^n \dfrac{1}{2^{i-1}}
\binom{i-1}{s_1-1}\dfrac{1}{2^{s_1-1}}\binom{s_1-1}{s_2-1}\dfrac{1}{2^{n-s_2}}\binom{n-s_2}{j-s_2}  . \]

{\bf Generating function of $p_L$}

The expression of $p_L$ in Equation \ref{eq:pL_def} initially involves $(k-1)$-nested binomial sum, for which we have no means of simplifying. The crux of our proposed approach is to find the generating function of $p_L$, giving rise to $k$-nested sum. After reordering the summation by moving the outermost sum (the summation over index $j$) to innermost, and evaluating the summations from the inside (that of the index $j$) to the outside (those of $s_{k-1},\dots, s_1$), fortunately, we obtain a very simple generating function for all $k\geq1$.

We define the generating function by
\begin{equation} 
\label{eq:fL}
F_L(x) = \sum_{j=1}^n p_L(j) \cdot x^j. 
\end{equation}

\begin{rem}
$F_L(x)$ is a valid probability generating function as $F_L(1) = 1.$  This also ensures that each $p_{L}$ in Equation \ref{eq:ak_decompose1} is a valid probability mass function. 
\end{rem}


Let us look at some examples of $F_L(x)$:
\begin{itemize}
\item for $k=0$,
\[ F_{[\,\,]}(x) = x^i.\] \\
This is a trivial case. When cards are not shuffled, the probability that the card in position $i$ is $i$ is 1. 

\item for $k=1$,
\[  F_{[1]}(x) = (0x+1)^{n-i}\left(\dfrac{x}{2}+\dfrac{1}{2}\right)^{i-1} x,
\qquad  F_{[2]}(x) = \left(\dfrac{x}{2}+\dfrac{1}{2}\right)^{n-i}\cdot \left(x+0\right)^{i-1} x.\]

\item for $k=2$, 
\[  F_{[1,1]}(x) =  (0x+1)^{n-i}\left(\dfrac{x}{4}+\dfrac{3}{4}\right)^{i-1} x,\qquad 
F_{[2,1]}(x) = \left(\dfrac{x}{4}+\dfrac{3}{4}\right)^{n-i}\left(\dfrac{x}{2}+\dfrac{1}{2}\right)^{i-1} x,\]
\[ F_{[1,2]}(x) = \left(\dfrac{x}{2}+\dfrac{1}{2}\right)^{n-i}\left(\dfrac{3x}{4}+\dfrac{1}{4}\right)^{i-1} x, \qquad   F_{[2,2]}(x) = \left(\dfrac{3x}{4}+\dfrac{1}{4}\right)^{n-i}\left(x+0\right)^{i-1} x.\]
\end{itemize}

{\bf Hooray! $F_L(x)$ has a nice simple pattern}

It is utterly important that $F_L(x)$ has a simple pattern from which the underlying structure of the problem can be uncovered. To keep track of the pattern, we define a functional notation:
\begin{equation}
\label{eq:g}
g(a,b) = [ax+(1-a)]^{n-i}\cdot[bx+(1-b)]^{i-1}\cdot x.
\end{equation} 

With this notation, the function $F_L(x)$ for $k=0, 1$ can be expressed as
\begin{itemize}
\item for $k=0$,
\[ F_{[\,\,]}(x) = g(0,1).\]

\item for $k=1$,
\[ F_{[1]}(x) = g\left(0,\dfrac{1}{2}\right), \;\ F_{[2]}(x) = g\left(\dfrac{1}{2},1\right). \]

\end{itemize}

\begin{figure}[h]
\noindent
\includegraphics[scale=0.45]{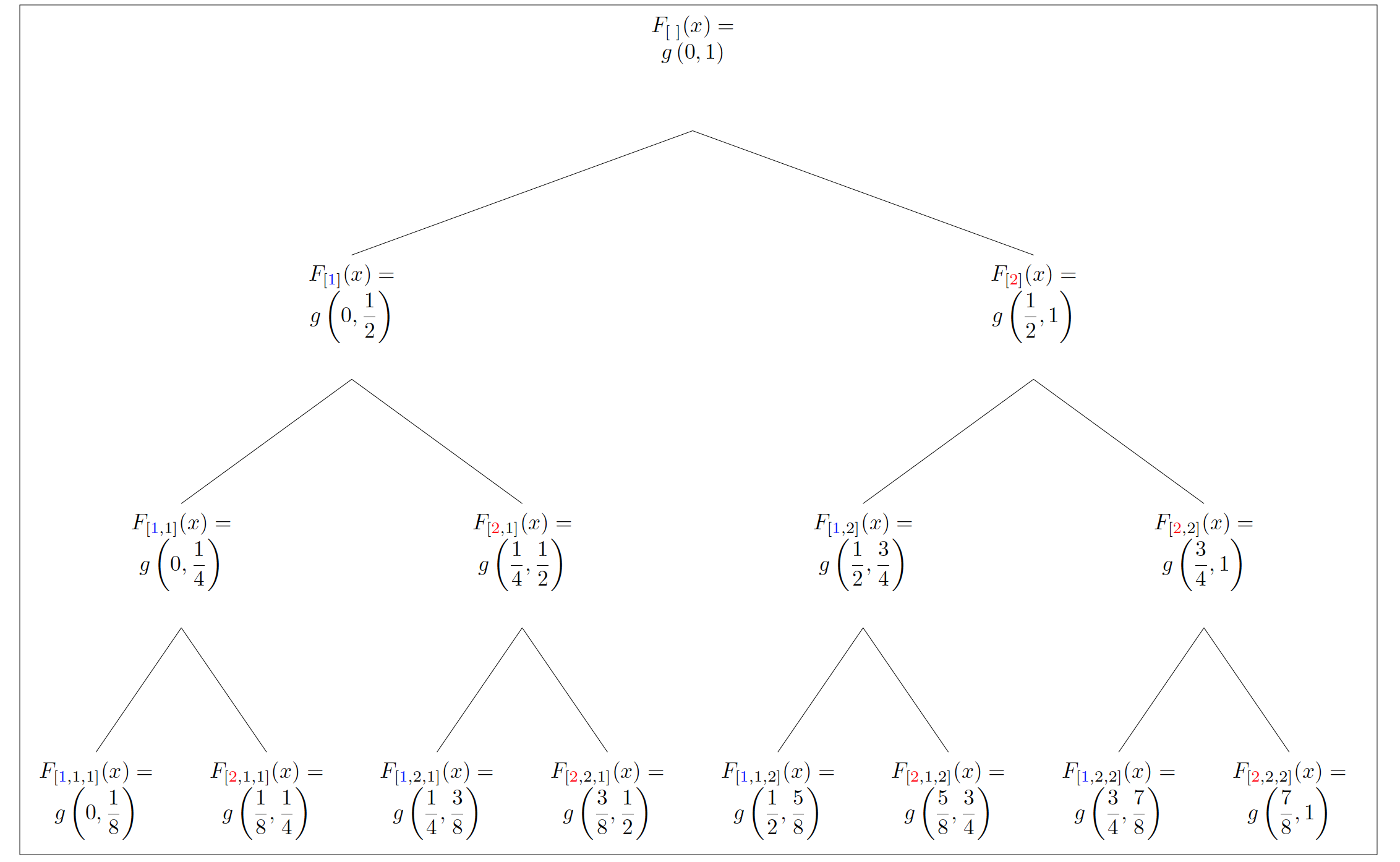}
\caption{\label{fig:tree} A bisection tree structure.  Each node of depth $k$ represents $ F_{L_t}(x)$ for $t=1, 2, \dots, 2^k$. }
\end{figure}

For general $k\geq1$, a bisection tree in Figure \ref{fig:tree} illustrates the connections between the expressions $F_{L}(x)$ of the $k$ and $k+1$ shuffles. 
At any subtree, the argument $(a,b)$ for the function $g(\cdot)$ of the parent node is bisected, yielding the arguments for the left  $(a,\frac{a+b}{2})$  and the right  $(\frac{a+b}{2},b)$ child nodes. 
It is important to note that the parent's index $L_t$ is expanded by adding the first entry (either 1 or 2) to create the child's indices, i.e. the list indices $\left[1,L_t\right]$ and $\left[2,L_t\right]$ for the first and second child nodes, respectively.

Let us state and prove this observation.

\begin{thm}[Generating functions]
\label{thm:generating functions}
Let $g(a,b) = [ax+(1-a)]^{n-i}\cdot[bx+(1-b)]^{i-1}\cdot x.$ 
For any given $k\geq1$, and the list indices  $L_1, L_2,\dots, L_{2^k}$, 
\[ F_{L_t}(x) = g\left( \dfrac{t-1}{2^k}, \dfrac{t}{2^k} \right),\]
 for $t=1, 2, \dots, 2^k$
\end{thm}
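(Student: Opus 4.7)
The plan is to prove the theorem by induction on $k$. The base case $k=1$ is a direct check: the two displayed formulas for $F_{[1]}$ and $F_{[2]}$ are exactly $g(0,\tfrac{1}{2})$ and $g(\tfrac{1}{2},1)$, matching $t=1$ and $t=2$.

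For the inductive step, I first pin down the positional correspondence in the bisection tree. The construction rule ``children of $L_t$ are $[1,L_t]$ and $[2,L_t]$'', read against the $k=2$ enumeration in the paper, shows that at level $k+1$ prepending $1$ sends the new list to position $2t-1$ and prepending $2$ sends it to position $2t$. Writing $(a,b)=((t-1)/2^k,\,t/2^k)$, the inductive goal becomes
\[
F_{[1,L_t]}(x) = g\!\left(a,\tfrac{a+b}{2}\right), \qquad F_{[2,L_t]}(x) = g\!\left(\tfrac{a+b}{2},b\right).
\]

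The workhorse is a one-step recursion obtained by peeling off the outermost sum in Equation \ref{eq:pL_def}:
\[
F_{[l,L]}(x) \;=\; \sum_{s=1}^{n} b^{(l)}_{i,s}\,\tilde{F}_{L,s}(x),
\]
where $\tilde{F}_{L,s}(x)$ is defined exactly as $F_L(x)$ but with the starting position $s$ in place of $i$. Since the setup is parametrized by an arbitrary starting index, the inductive hypothesis (proved uniformly in the starting index) gives $\tilde{F}_{L,s}(x) = [ax+(1-a)]^{n-s}[bx+(1-b)]^{s-1}\,x$. Substituting this together with $b^{(1)}_{i,s}=\tfrac{1}{2^{i-1}}\binom{i-1}{s-1}$ and $b^{(2)}_{i,s}=\tfrac{1}{2^{n-i}}\binom{n-i}{s-i}$ isolates the $s$-dependence, up to $s$-free prefactors, as a binomial coefficient times $[ax+(1-a)]^{M-s}[bx+(1-b)]^{s}$, with $M=i-1$ when $l=1$ and $M=n-i$ when $l=2$. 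The binomial identity $\sum_{s=0}^{M}\binom{M}{s}A^{M-s}B^{s}=(A+B)^{M}$ applied with $A=ax+(1-a)$ and $B=bx+(1-b)$ then collapses the sum and yields $(A+B)^{M} = 2^{M}\bigl[\tfrac{a+b}{2}\,x + (1-\tfrac{a+b}{2})\bigr]^{M}$; the factor $2^{M}$ cancels exactly the $2^{i-1}$ or $2^{n-i}$ coming from $b^{(l)}_{i,s}$, and the survivor is precisely $g(a,(a+b)/2)$ or $g((a+b)/2,b)$.

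The main obstacle is bookkeeping: reindexing each of the two sums so that the $s$-dependence packages into the form demanded by the binomial theorem, and keeping straight which factor carries the $(n-i)$-th power versus the $(i-1)$-th power. Once the reindexing is correct, the binomial identity does all the real work, and the midpoint $(a+b)/2$ emerges automatically from the averaging inside the square bracket; the positional relabelling $[1,L_t]\mapsto L_{2t-1}$ and $[2,L_t]\mapsto L_{2t}$ is a one-time combinatorial check made at the outset.
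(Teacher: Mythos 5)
Your proposal is correct and follows essentially the same route as the paper: induction on the length of $L$, peeling off the outermost sum so that the inductive hypothesis is applied with the starting index $i$ replaced by the summation variable $s$, and collapsing the resulting sum with the binomial theorem to produce the midpoint $\tfrac{a+b}{2}$ (the paper packages this last step as a standalone lemma on the operators $T_1$ and $T_2$, which you inline). Your explicit remark that the inductive hypothesis must hold uniformly in the starting index is a point the paper's write-up leaves implicit, but the substance of the argument is the same.
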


We use the following lemma to find the sum of the terms in $F_L(x)$ recursively.
\begin{lem}
Assume $f(i) = A^{n-i}B^{i-1}x$ where $A= ax+(1-a)$ and $B = bx+(1-b)$. Then,
\[T_1(f) :=  \sum_{s=1}^i \dfrac{1}{2^{i-1}}\binom{i-1}{s-1}f(s) 
= A^{n-i}\left( \dfrac{A+B}{2} \right)^{i-1}x;\]
\[T_2(f) :=   \sum_{s=i}^n \dfrac{1}{2^{n-i}}\binom{n-i}{s-i}f(s) 
= \left( \dfrac{A+B}{2} \right)^{n-i}B^{i-1}x.\]
\end{lem}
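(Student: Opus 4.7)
The plan is to prove both identities by a change of summation index followed by a single application of the binomial theorem; this is essentially a direct computation, so the main task is bookkeeping of exponents rather than finding a clever trick.

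For $T_1(f)$, I would substitute $t = s-1$ so the sum $\sum_{s=1}^{i}\frac{1}{2^{i-1}}\binom{i-1}{s-1}A^{n-s}B^{s-1}x$ becomes $\frac{x}{2^{i-1}}\sum_{t=0}^{i-1}\binom{i-1}{t}A^{n-1-t}B^{t}$. The idea is to peel off a factor of $A^{n-i}$ (which does not depend on the summation index) so that the remaining $A$-exponent inside the sum is $i-1-t$, perfectly complementing $B^{t}$. At that point the inner sum is literally $(A+B)^{i-1}$ by the binomial theorem, and after absorbing $\frac{1}{2^{i-1}}$ into $(A+B)^{i-1}$, the claimed expression $A^{n-i}\bigl(\tfrac{A+B}{2}\bigr)^{i-1}x$ falls out.

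For $T_2(f)$, the analogous move is to substitute $t = s-i$, turning the sum into $\frac{x}{2^{n-i}}\sum_{t=0}^{n-i}\binom{n-i}{t}A^{n-i-t}B^{i-1+t}$. Factoring out $B^{i-1}$ now makes the inner sum $\sum_{t=0}^{n-i}\binom{n-i}{t}A^{n-i-t}B^{t} = (A+B)^{n-i}$, and dividing by $2^{n-i}$ gives $\bigl(\tfrac{A+B}{2}\bigr)^{n-i}B^{i-1}x$, as required.

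The only thing to be slightly careful about is that the stated summation ranges ($s$ from $1$ to $i$ in $T_1$ and from $i$ to $n$ in $T_2$) match the nonzero range of the binomial coefficients $\binom{i-1}{s-1}$ and $\binom{n-i}{s-i}$, so the substitutions $t=s-1$ and $t=s-i$ land cleanly on $\{0,1,\dots,i-1\}$ and $\{0,1,\dots,n-i\}$ respectively, with no boundary issues. Since both identities reduce to a single binomial expansion after an index shift and the extraction of one monomial factor, there is no real obstacle here; the lemma is purely mechanical, and its value lies in how it will be applied recursively (via the bisection tree of Figure~\ref{fig:tree}) to prove Theorem~\ref{thm:generating functions}.
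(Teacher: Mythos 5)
Your proof is correct and is essentially the same argument as the paper's: both reduce each identity to a single binomial expansion after an index shift and extraction of a monomial factor (the paper writes $f(s)=A^{n-1}(B/A)^{s-1}x$ and substitutes $B/A$ into the special-case identity, which is the same computation in different clothing). No gaps.
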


\begin{proof}
First, it is easy to show that 
\[ \sum_{s=1}^i \dfrac{1}{2^{i-1}}\binom{i-1}{s-1}x^{s-1} 
= \left(\dfrac{x}{2}+\dfrac{1}{2}\right)^{i-1}  \]
and
\[  \sum_{s=i}^n \dfrac{1}{2^{n-i}}\binom{n-i}{s-i}x^{s-1}
=  \left(\dfrac{x}{2}+\dfrac{1}{2}\right)^{n-i}\cdot \left(x+0\right)^{i-1}.  \]

For the general case, 
\begin{align*}
\sum_{s=1}^i \dfrac{1}{2^{i-1}}\binom{i-1}{s-1}f(s)
&= \sum_{s=1}^i \dfrac{1}{2^{i-1}}\binom{i-1}{s-1}
A^{n-1}\left(\dfrac{B}{A}\right)^{s-1}x \\
&= A^{n-1}x \left( \dfrac{A+B}{2A} \right)^{i-1},   
\;\ \;\ \text{from the above identity} \\
&= A^{n-i}\left( \dfrac{A+B}{2} \right)^{i-1}x.
\end{align*}
This proves the first claim regarding $T_1(f)$. The 
second claim can be done similarly.
\end{proof}

We now return to the proof of Theorem \ref{thm:generating functions}.

\begin{proof}[Proof of Theorem \ref{thm:generating functions}]
We prove by induction on $k=|L|$.
For the base case, we verify that $F_{[\,\,]}(x)=x^i$, is indeed $g(0,1)$.
For the induction step, let $L' = [l_{k-1},..,l_{2},l_{1}]$ and $L=[l_{k},l_{k-1},...,l_{2},l_{1}]=[l_{k},L']$ . Assume $f := F_{L'}(x) 
=  g\left( \dfrac{t-1}{2^{k-1}}, \dfrac{t}{2^{k-1}} \right)$.
Then it follows from Lemma 3 that
$F_L(x) = T_1(f)= g\left( \dfrac{2t-2}{2^{k}}, \dfrac{2t-1}{2^{k}} \right)$ if $l_k=1$ 
and $F_L(x) = T_2(f)=g\left( \dfrac{2t-1}{2^{k}}, \dfrac{2t}{2^{k}} \right)$ if $l_k=2.$
\end{proof}

\textbf{Open Problem 1:} Find a combinatorial interpretation for
the generating function $F_L(x)$ of this card guessing problem. 

\section{Distribution of $p_L$ when $n$ is large}
\vspace{-4.5em}
\begin{flushright}
\begin{tikzpicture}
\node [anchor=west] at (1.2,.7) {\bf\scriptsize{47\%}};
\draw [fill=ColorGray] (0,0) rectangle (2,.5);
\draw [fill=ColorGreen] (0,0) rectangle (0.9,.5);
\end{tikzpicture}
\end{flushright}

{\bf Exact distribution of $p_L$}

Through the formula of $g(a,b)$ in Theorem \ref{thm:generating functions}, it is easy to see that $p_L$ is the sum of two independent binomial random variables and a constant. The following corollary formally states the distribution of  $p_L$, and eventually the exact distribution of $a_{i,j}^{(k)}$.


\begin{cor}
\label{cor:PoissonBinomial}
For any given $k\geq1$ and  $L_1, L_2,\dots, L_{2^k}$,  let $S_{L_t} $ be a random variable whose probability generating  function is given by $F_{L_t}(x)$ defined in Theorem \ref{thm:generating functions}.  Then,
\[
S_{L_t} \sim B_1+B_2+1,
\]
where $B_1\sim\text{Binomial}\left(n-i,\dfrac{t-1}{2^k}\right)$ and $B_2\sim\text{Binomial\ensuremath{\left(i-1,\dfrac{t}{2^k}\right)}}$ are independent. 
In other words, $p_{L_t}$ follows a Poisson Binomial distribution with success probabilities equal to
\[\underbrace{\dfrac{t-1}{2^k},...,\dfrac{t-1}{2^k}}_{n-i \text{ times}}, \underbrace{\dfrac{t}{2^k},...,\dfrac{t}{2^k}}_{i-1\text{ times}},1.\]

Hence, for fixed values of $n, i, k$,  the distribution $\{a_{i,j}^{(k)}: j=1,\dots, n\}$ is a mixture of $2^k$ components of Poisson Binomials.
\end{cor}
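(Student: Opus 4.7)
The plan is to extract the distribution of $S_{L_t}$ directly from the factored form of its probability generating function provided by Theorem \ref{thm:generating functions}, exploiting the classical fact that the PGF of a Binomial$(m,p)$ random variable is $[(1-p)+px]^m$ and that the PGF of an independent sum is the product of the individual PGFs.

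Concretely, I would first substitute $a = (t-1)/2^k$ and $b = t/2^k$ into the definition of $g(a,b)$ from Equation \ref{eq:g}, which writes $F_{L_t}(x)$ as a product of three factors:
\[
F_{L_t}(x) = \underbrace{\left[\tfrac{t-1}{2^k}\,x + \left(1-\tfrac{t-1}{2^k}\right)\right]^{n-i}}_{(\mathrm{I})} \cdot \underbrace{\left[\tfrac{t}{2^k}\,x + \left(1-\tfrac{t}{2^k}\right)\right]^{i-1}}_{(\mathrm{II})} \cdot \underbrace{x}_{(\mathrm{III})}.
\]
Next I would identify each factor as a PGF: (I) is the PGF of $B_1 \sim \text{Binomial}(n-i,(t-1)/2^k)$, (II) is the PGF of $B_2 \sim \text{Binomial}(i-1,t/2^k)$, and (III) is the PGF of the constant random variable $1$. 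Since a PGF determines the distribution uniquely, and since the product of PGFs corresponds to the law of an independent sum, this forces
\[
S_{L_t} \;\stackrel{d}{=}\; B_1 + B_2 + 1,\qquad B_1 \perp B_2,
\]
which is exactly the first claim of Corollary \ref{cor:PoissonBinomial}.

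For the Poisson Binomial interpretation, I would simply rewrite each binomial as a sum of independent Bernoullis: $B_1$ is the sum of $n-i$ independent Bernoullis with success probability $(t-1)/2^k$, $B_2$ is the sum of $i-1$ independent Bernoullis with success probability $t/2^k$, and the constant $1$ is a Bernoulli with success probability $1$. Aggregating these independent Bernoullis gives precisely a Poisson Binomial with the list of success probabilities stated in the corollary. Finally, for the mixture assertion, I would invoke the decomposition in Equation \ref{eq:ak_decompose1}, $a_{i,j}^{(k)} = 2^{-k}\sum_{t=1}^{2^k} p_{L_t}(j)$, together with the remark that each $p_{L_t}$ is a bona fide probability mass function (since $F_{L_t}(1)=1$), which displays $\{a_{i,j}^{(k)}\}_j$ as an equally weighted mixture of the $2^k$ Poisson Binomial components just described.

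There is no real obstacle here: the entire corollary is a direct reading of Theorem \ref{thm:generating functions} through the lens of the standard PGF dictionary. The only minor care needed is to note that the boundary values $t=1$ and $t=2^k$ produce degenerate binomials (success probability $0$ or $1$), but these cases pose no problem and still fit the Poisson Binomial description. Consequently the corollary is really an observation rather than a computation, and the proof will be short.
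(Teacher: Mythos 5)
Your proposal is correct and matches the paper's own (implicit) argument: the authors likewise just read off from Theorem \ref{thm:generating functions} that $F_{L_t}(x)=g\left(\frac{t-1}{2^k},\frac{t}{2^k}\right)$ factors as the product of two binomial probability generating functions and the factor $x$, so $S_{L_t}$ is the independent sum $B_1+B_2+1$. Your extra remark about the degenerate binomials at $t=1$ and $t=2^k$ is a small but welcome addition that the paper only addresses later when discussing the ``trivial cases.''
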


\begin{ex}
To better visualize the distribution, Figure \ref{fig:overlayHistK2} illustrates the p.m.f.  for the case $k=2$ shuffles when $i=15$ and $n=40$. 

\begin{figure}[ht]
\centering
\includegraphics[scale=0.23]{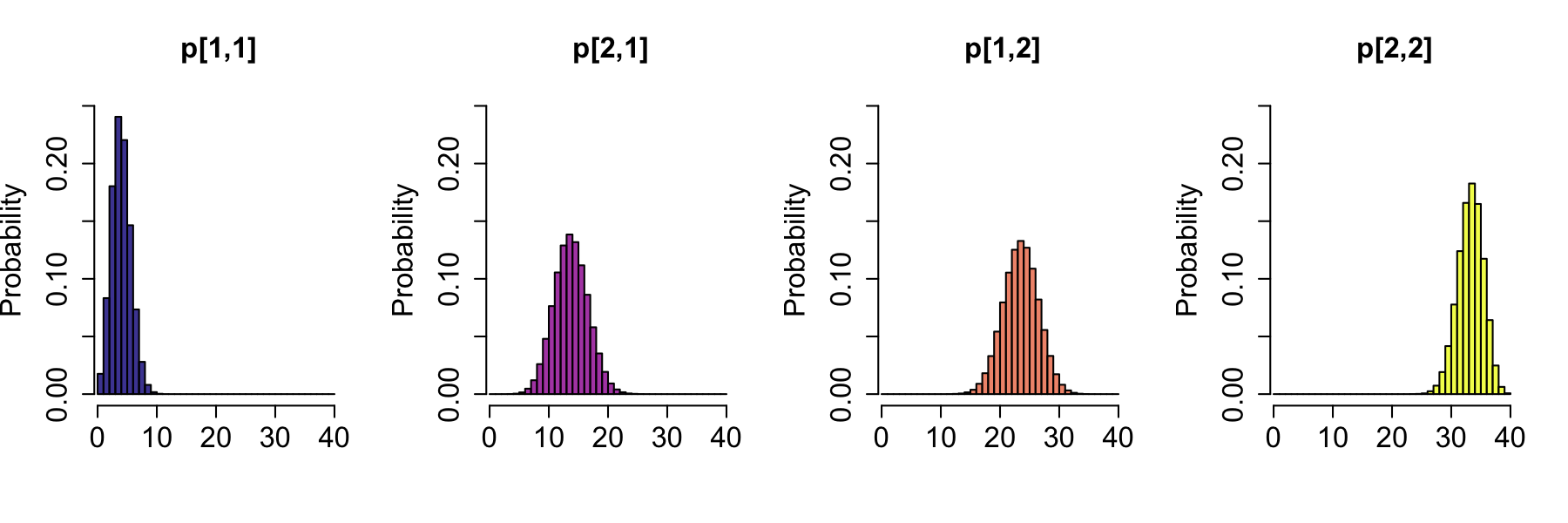}
\caption{\label{fig:overlayHistK2}Probability mass functions $p_{[1,1]}$, $p_{[2,1]}$, $p_{[1,2]}$, $p_{[2,2]}$ for $i=15$ and $n=40$}
\end{figure}

In this case, the distribution of each random variable is
\[S_{[1,1]}\sim\text{Binomial\ensuremath{\left(14,\dfrac{1}{4}\right)}}+1, \;\;\;\;\; S_{[2,1]}\sim \text{Binomial}\left(25,\dfrac{1}{4}\right)+\text{Binomial\ensuremath{\left(14,\dfrac{2}{4}\right)}}+1,\]
\[S_{[1,2]}\sim\text{Binomial}\left(25,\dfrac{2}{4}\right)+\text{Binomial\ensuremath{\left(14,\dfrac{3}{4}\right)}}+1, \;\;\;\; S_{[2,2]}\sim\text{Binomial}\left(25,\dfrac{3}{4}\right)+15.\]
\end{ex}


{\bf Lyapunov Central Limit Theorem }

The Lyapunov CLT generalizes the classical Lindeberg-Feller CLT. The
theorem provides a sufficient condition under which the sum of independent
(but not necessarily identical) random variables converges in distribution
to the standard normal distribution \cite{D}.

\begin{thm} [Lyapunov CLT]
 Let $X_{1},X_{2},\dots,$ be a sequence of
independent random variables such that $E\left(X_{n}\right)=\mu_{n}$
and $\text{Var}\left(X_{n}\right)=\sigma_{n}^{2}$ are both finite.
Define 
\begin{align*}
s_{n}^{2} & =\sum_{i=1}^{n}\sigma_{i}^{2}.
\end{align*}

If there exists $\delta>0$ such that 
\[
\lim_{n\rightarrow\infty}\frac{1}{s_{n}^{2+\delta}}\sum_{i=1}^{n}E\left(\left|X_{i}-\mu_{i}\right|^{2+\delta}\right)=0,\qquad\text{{(Lyapunov's\,condition)}}
\]
then 
\[
\frac{1}{s_{n}}\sum_{i=1}^{n}E\left(X_{i}-\mu_{i}\right)\stackrel{d}{\rightarrow}N\left(0,1\right).
\]
\end{thm}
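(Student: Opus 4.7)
The statement is a classical one (due to Lyapunov, 1901), so my plan follows the textbook characteristic-function route. The goal is to show that the characteristic function of the normalized partial sum converges pointwise to $e^{-t^{2}/2}$, so that L\'evy's continuity theorem yields convergence in distribution to $N(0,1)$.

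First I would center and scale: set $Y_{n,i} = (X_{i}-\mu_{i})/s_{n}$, so that the random variable $T_{n}=\sum_{i=1}^{n}Y_{n,i}$ has mean $0$ and variance $1$. Writing $\varphi_{n,i}(t)=E[e^{itY_{n,i}}]$, the characteristic function of $T_{n}$ factors as $\varphi_{n}(t)=\prod_{i=1}^{n}\varphi_{n,i}(t)$ by independence. The heart of the argument is the Taylor-type bound
\[
\Bigl|e^{ix}-1-ix-\tfrac{(ix)^{2}}{2}\Bigr| \;\leq\; C_{\delta}\min\{|x|^{2},\,|x|^{2+\delta}\},
\]
applied with $x = tY_{n,i}$, which after taking expectations gives
\[
\varphi_{n,i}(t) \;=\; 1 - \tfrac{t^{2}\sigma_{i}^{2}}{2s_{n}^{2}} + r_{n,i}(t), \qquad |r_{n,i}(t)| \;\leq\; C_{\delta}|t|^{2+\delta}\,\frac{E|X_{i}-\mu_{i}|^{2+\delta}}{s_{n}^{2+\delta}}.
\]
Summing the remainder bound over $i$ and invoking the Lyapunov hypothesis immediately yields $\sum_{i=1}^{n}|r_{n,i}(t)|\to 0$.

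Next I would use H\"older's inequality in the form $\sigma_{i}^{2}\leq(E|X_{i}-\mu_{i}|^{2+\delta})^{2/(2+\delta)}$ to deduce that $\max_{i\leq n}\sigma_{i}^{2}/s_{n}^{2}\to 0$; this forces each $\varphi_{n,i}(t)$ into a neighborhood of $1$ on which the principal-branch expansion $\log(1+z)=z+O(|z|^{2})$ is valid. Combined with the identity $\sum_{i=1}^{n}\sigma_{i}^{2}/s_{n}^{2}=1$, one obtains $\log\varphi_{n}(t)\to-t^{2}/2$ pointwise, hence $\varphi_{n}(t)\to e^{-t^{2}/2}$, and L\'evy's continuity theorem completes the argument.

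The main obstacle, and the place where a naive approach fails, is the complex-logarithm step: one must sum $\log\varphi_{n,i}(t)$ term by term and control the quadratic error uniformly across $i$, which is precisely why the uniform smallness $\max_{i}\sigma_{i}^{2}/s_{n}^{2}\to 0$ is essential rather than a mere byproduct. A shorter route I would keep in mind is to first show that Lyapunov's condition implies Lindeberg's condition (via Chebyshev, namely $E[(X_{i}-\mu_{i})^{2}\mathbf{1}_{|X_{i}-\mu_{i}|>\varepsilon s_{n}}]\leq \varepsilon^{-\delta}E|X_{i}-\mu_{i}|^{2+\delta}/s_{n}^{\delta}$) and then invoke Lindeberg's CLT as a black box; but the characteristic-function proof sketched above is self-contained and, in my view, more transparent.
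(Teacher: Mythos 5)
The paper offers no proof of this theorem: it is quoted as a classical result from \cite{D} and used purely as a black box (via the Bernoulli example and Corollary \ref{cor:LyapunovCLTforBinomials} that follow it), so there is no in-paper argument to compare yours against. Your characteristic-function sketch is the standard textbook proof and is essentially sound: centering and scaling, factorization by independence, the third-order Taylor estimate, Lyapunov's condition killing the aggregate remainder, the moment bound forcing $\max_{i\leq n}\sigma_{i}^{2}/s_{n}^{2}\to 0$ so that the complex logarithms can be summed with controlled quadratic error, and L\'evy continuity to finish. One technical point deserves attention: the inequality $|e^{ix}-1-ix-(ix)^{2}/2|\leq C_{\delta}\min\{|x|^{2},|x|^{2+\delta}\}$ is \emph{false} for $\delta>1$ (the elementary estimate is $\min\{|x|^{2},|x|^{3}/6\}$, and $|x|^{3}$ is not dominated by $|x|^{2+\delta}$ near the origin when $\delta>1$), so your main route as written covers only $0<\delta\leq 1$. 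This is repairable either by first showing that Lyapunov's condition for some $\delta$ implies it for every smaller positive exponent (a two-step H\"older argument), or---more simply---by promoting your ``shorter route'' to the main proof: the truncation estimate $E[(X_{i}-\mu_{i})^{2}\mathbf{1}_{|X_{i}-\mu_{i}|>\varepsilon s_{n}}]\leq \varepsilon^{-\delta}s_{n}^{-\delta}E|X_{i}-\mu_{i}|^{2+\delta}$ shows that Lyapunov implies Lindeberg for all $\delta>0$ at once, and that is in fact how the cited reference proceeds. Finally, note that the conclusion as printed in the paper contains a typo: the expectation operator in $\frac{1}{s_{n}}\sum_{i=1}^{n}E\left(X_{i}-\mu_{i}\right)$ should not be there (that quantity is identically zero); the statement your proposal actually establishes, $\frac{1}{s_{n}}\sum_{i=1}^{n}\left(X_{i}-\mu_{i}\right)\stackrel{d}{\rightarrow}N\left(0,1\right)$, is the intended one.
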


The following example gives a sufficient condition for the sum of non-identical Bernoulli random variables to converge to a normal distribution.

\begin{ex}[Lyapunov's condition for Bernoulli random variables]
Let $X_{n}\sim\text{Bernoulli}\left(p_{n}\right)$ be a sequence of
independent Bernoulli random variables. Then, $\mu_{n}=p_{n}$ and
$\sigma_{n}^{2}=p_{n}\left(1-p_{n}\right)$. Since
\[
E\left(\left|X_{n}-\mu_{n}\right|^{3}\right)=p_{n}\left(1-p_{n}\right)^{3}+\left(1-p_{n}\right)p_{n}^{3}=\sigma_{n}^{2}\left(1-2p_{n}\left(1-p_{n}\right)\right)\leq\sigma_{n}^{2},
\]
it follows that (for $\delta=1$), 
\[
\frac{1}{s_{n}^{3}}\sum_{i=1}^{n}E\left(\left|X_{i}-\mu_{i}\right|^{3}\right)\leq\frac{1}{s_{n}^{3}}\sum_{i=1}^{n}\sigma_{n}^{2}=\frac{1}{s_{n}}.
\]
 
Thus, if $\lim_{n\rightarrow\infty}s_{n}=\infty$, or equivalently,
whenever
\[
\sum_{i=1}^{n}p_{i}\left(1-p_{i}\right)\rightarrow\infty,
\]
 the Lyapunov's condition holds for Bernoulli random variables.
 \end{ex}

\begin{cor}
\label{cor:LyapunovCLTforBinomials}
Let $0<p, q<1$ be fixed, and $T_{1}\sim\text{Binomial}\left(m,p\right)$
and $T_{2}\sim\text{Binomial\ensuremath{\left(n,q\right)}}$ be two
independent binomial random variables. Then, 
\[
\frac{\left(T_{1}+T_{2}\right)-\left(mp+nq\right)}{\sqrt{mp\left(1-p\right)+nq\left(1-q\right)}}\stackrel{d}{\rightarrow}N\left(0,1\right)
\]
whenever $m\rightarrow\infty$ or $n\rightarrow\infty$ or both hold.
\end{cor}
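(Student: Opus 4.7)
The plan is to recognize that $T_1+T_2$ can be written as a sum of $m+n$ independent (but non-identically distributed) Bernoulli random variables, and then verify Lyapunov's condition using the bound already established in the preceding example.

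More concretely, I would first write $T_1 = X_1 + \cdots + X_m$ and $T_2 = X_{m+1} + \cdots + X_{m+n}$ where $X_1,\dots,X_m$ are i.i.d.\ $\text{Bernoulli}(p)$ and $X_{m+1},\dots,X_{m+n}$ are i.i.d.\ $\text{Bernoulli}(q)$, independent of each other. Writing $N=m+n$, I would set $\mu_i = p$ for $i\leq m$ and $\mu_i = q$ for $i>m$, and similarly $\sigma_i^2 = p(1-p)$ or $q(1-q)$. Then
\[
s_N^2 = \sum_{i=1}^{N}\sigma_i^2 = mp(1-p) + nq(1-q),
\]
which is precisely the denominator in the statement.

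Next, I would invoke the calculation done in the example for Bernoulli variables, which shows $E(|X_i-\mu_i|^3)\leq \sigma_i^2$ for each $i$. Hence, taking $\delta=1$,
\[
\frac{1}{s_N^{3}}\sum_{i=1}^{N} E\bigl(|X_i-\mu_i|^{3}\bigr)
\;\leq\; \frac{1}{s_N^{3}}\sum_{i=1}^{N}\sigma_i^{2} \;=\; \frac{1}{s_N},
\]
so Lyapunov's condition reduces to verifying $s_N\to\infty$.

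The only remaining point, and not really an obstacle, is the observation that since $0<p,q<1$ are \emph{fixed}, both $p(1-p)$ and $q(1-q)$ are strictly positive constants. Consequently
\[
s_N^{2} \;=\; mp(1-p) + nq(1-q) \;\geq\; \min\{p(1-p),\,q(1-q)\}\cdot \max\{m,n\},
\]
which diverges as soon as $m\to\infty$ or $n\to\infty$. Applying the Lyapunov CLT to the sequence $X_1,X_2,\dots$ then yields the claimed convergence in distribution. No other obstacles arise; the corollary is essentially a bookkeeping consequence of the general Lyapunov CLT plus the Bernoulli example already worked out.
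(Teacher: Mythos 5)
Your proposal is correct and follows essentially the same route as the paper: decompose $T_1+T_2$ into $m+n$ independent Bernoulli variables and invoke the Lyapunov condition already verified in the Bernoulli example, reducing everything to $s_N\to\infty$. If anything, your explicit lower bound $s_N^2\geq \min\{p(1-p),\,q(1-q)\}\cdot\max\{m,n\}$ is slightly more careful than the paper's one-line justification.
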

\begin{proof}
First note that $T_{1}+T_{2}$ has mean $mp+nq$ and variance $mp\left(1-p\right)+nq\left(1-q\right)$.
The result is immediate as we can write

\[
T_1+T_2\stackrel{d}{=}\underbrace{X_{1}+\dots+X_{m}}_{X_i\sim\text{Bernoulli}(p)}+\underbrace{X_{m+1}+\dots+X_{m+n}}_{X_i\sim\text{Bernoulli}(q)},
\]and the Lyapunov's condition for Bernoulli random variables, 
\[
\sum_{i=1}^{m+n}p_{i}\left(1-p_{i}\right)=mp+nq\rightarrow\infty,
\]
is satisfied whenever $m\rightarrow\infty$ or $n\rightarrow\infty$
or both hold.
\end{proof}

\vspace{-1em}
\begin{flushleft}
\shadowbox{\begin{minipage}[t]{1\columnwidth}%
{\bf \textsc{The trivial cases of $p_{[1,1,\dots,1]}$ and $p_{[2,2,\dots,2]}$}}\\

Corollaries \ref{cor:PoissonBinomial} and \ref{cor:LyapunovCLTforBinomials} tell us that as $n$ is sufficiently large, $p_{L_t}$ will approach a normal distribution, except for the trivial cases that might occur with $p_{[1,1,\dots,1]}$ and $p_{[2,2,\dots,2]}$. In particular, regardless of the magnitude of $n$, $p_{[1,1,\dots,1]}$ will remain a shifted binomial random variable, $\text{Binomial\ensuremath{\left(i-1,\dfrac{1}{2^k}\right)}}+1$, for small $i$, whereas $p_{[2,2,\dots,2]}$ remains $\text{Binomial}\left(n-i,1-\dfrac{1}{2^k}\right)+i$, for small $n-i$. 
\end{minipage}}
\par\end{flushleft}

\vspace{1em}
For the future reference,  the modes of $p_{[1,1,\dots,1]}$ and $p_{[2,2,\dots,2]}$ are located at 
\begin{equation}
\label{eq:mode1}
j_i = \lf \dfrac{i}{2^k} \rf + 1
\end{equation}
and
\begin{equation}
\label{eq:mode2}
j_i = \lf (n-i+1)\left(1-\dfrac{1}{2^k}\right) \rf + i,
\end{equation}
respectively, for $i=1,\dots, n$.

We shall hereafter call these two cases {\it``the trivial cases''}. In order not to interrupt the flow of the presentation, we will consider only the non-trivial cases of $p_{[1,1,\dots,1]}$ and $p_{[2,2,\dots,2]}$ (when the CLT applies) in detail, and defer the discussion of the trivial cases to the end of the section.  

We immediately obtain the following corollary (for the non-trivial cases).

\begin{cor}
\label{cor:CLT}
Let $k\geq1$ be fixed, and $t=1,2, 3, \dots, 2^k$. As $n\rightarrow\infty$,
\[
p_{L_t} \stackrel{d}{\rightarrow}N\left(\mu_{t},\sigma^2_t\right),
\]
where 
\[
\mu_{t}=(n-i)\left(\dfrac{t-1}{2^k}\right)+(i-1)\left(\dfrac{t}{2^k}\right)+1
\] 
and 
\[
\sigma^2_t= (n-i)\left(\dfrac{t-1}{2^k}\right)\left(1-\dfrac{t-1}{2^k}\right)+(i-1)\left(\dfrac{t}{2^k}\right)\left(1-\dfrac{t}{2^k}\right).
\]
\end{cor}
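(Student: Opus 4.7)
The plan is to combine Corollaries \ref{cor:PoissonBinomial} and \ref{cor:LyapunovCLTforBinomials}; the result is largely bookkeeping rather than substantial new work. By Corollary \ref{cor:PoissonBinomial}, the random variable with p.m.f.\ $p_{L_t}$ is distributed as $B_1 + B_2 + 1$, where $B_1 \sim \text{Binomial}\bigl(n-i,\tfrac{t-1}{2^k}\bigr)$ and $B_2 \sim \text{Binomial}\bigl(i-1,\tfrac{t}{2^k}\bigr)$ are independent. The deterministic shift by $1$ only translates the limit, so it is enough to apply a CLT to $B_1 + B_2$ and track the normalising constants.

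First I would read off the mean and variance by a direct computation. Linearity of expectation and independence of $B_1, B_2$ give
\[
E(B_1+B_2+1) = (n-i)\frac{t-1}{2^k} + (i-1)\frac{t}{2^k} + 1 = \mu_t,
\]
\[
\mathrm{Var}(B_1+B_2+1) = (n-i)\frac{t-1}{2^k}\Bigl(1-\frac{t-1}{2^k}\Bigr) + (i-1)\frac{t}{2^k}\Bigl(1-\frac{t}{2^k}\Bigr) = \sigma_t^2,
\]
which matches the stated parameters exactly.

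Next I would invoke Corollary \ref{cor:LyapunovCLTforBinomials}. For the interior indices $t \in \{2,\dots,2^k-1\}$, both success probabilities $p=(t-1)/2^k$ and $q=t/2^k$ lie strictly in $(0,1)$, and $n\rightarrow\infty$ forces at least one of $n-i$ and $i-1$ to tend to infinity, so the hypothesis holds and the standardised sum converges to $N(0,1)$; equivalently, $p_{L_t} \stackrel{d}{\rightarrow} N(\mu_t,\sigma_t^2)$. For the boundary indices $t=1$ and $t=2^k$, one of the two binomials is degenerate (constantly $0$ or constantly $i-1$, respectively), so the claim reduces to the standard CLT for a single binomial with success probability $1/2^k$ or $1-1/2^k$, applicable precisely in the non-trivial regimes $i-1\rightarrow\infty$ (for $t=1$) and $n-i\rightarrow\infty$ (for $t=2^k$) already highlighted in the shaded box preceding the corollary.

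Honestly, there is no serious obstacle: all machinery is in place, and the only things to verify are the matching of parameters and the regime of applicability. The delicate work — locating the peak of each $p_{L_t}$ and ruling out overlap between the $2^k$ components of the mixture so that the highest peak identifies $j_i^*$ — is deferred to the subsequent sections, and is where the interesting analysis lives.
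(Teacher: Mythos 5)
Your proposal is correct and follows exactly the route the paper intends: the paper states that Corollary \ref{cor:CLT} is obtained ``immediately'' from Corollaries \ref{cor:PoissonBinomial} and \ref{cor:LyapunovCLTforBinomials}, and your write-up simply makes that explicit by matching $\mu_t$, $\sigma_t^2$ and checking the applicability of the Lyapunov condition. Your extra care with the degenerate boundary indices $t=1$ and $t=2^k$ matches the paper's own treatment of the ``trivial cases'' in the shaded box preceding the corollary.
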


\begin{figure}[h!]
\centering
\includegraphics[scale=0.16]{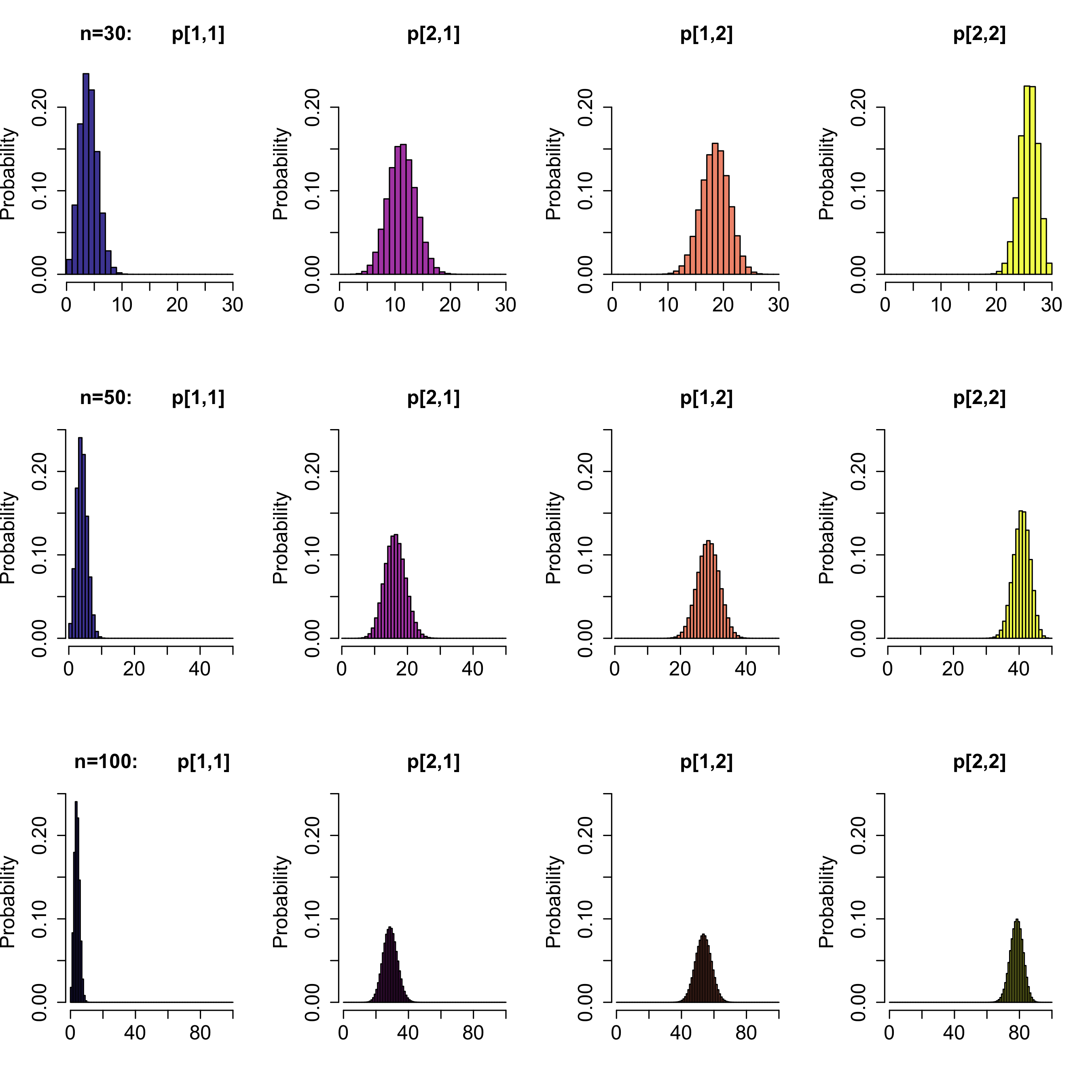}
\caption{\label{fig:overlayHistK2CLT} (Figure accompanying Remark \ref{rem:CLTapplicable}.) Probability mass functions $p_{[1,1]}$, $p_{[2,1]}$, $p_{[1,2]}$, $p_{[2,2]}$ for $i=15$, and $n$ varies from 30 (top row), 50 (middle row), and 100 (bottom row). Observe that regardless of the value of $n$, the distribution $p_{[1,1]}$ remains the same.}
\end{figure}

\begin{rem}
\label{rem:CLTapplicable}
As a practical note, whenever we come across the statement  {\it``as $n\rightarrow\infty$'}' or {\it``large enough $n$''}, the condition such as $n\geq N(k)$ can be used for the CLT to be applicable. This can be seen from Figure \ref{fig:overlayHistK2CLT}: $k=2$ shuffles where we fix $i=15$, and vary $n$ from 30, 50, and 100.  Here, $n=30$ seems pretty large already for the middle two distributions $p_{[2,1]}$ and $p_{[1,2]}$ to converge to a normal distribution. For the case when $t=1$, since $i=15$ is small, we have the trivial case here. The distribution $p_{[1,1]}$ which is independent of $n$ remains the same shifted Binomial regardless of the values of $n$ (see the first column of the figure). On the other hand, for the case when $t=2^k$, $n-i=n-15$ increases from $15, 35, 85$ as $n$ increases from $30, 50, 100$, and so the distribution of $p_{[2,2]}$ converges to a normal distribution in this case (see the last column of the figure).
\end{rem}

\section{Which peak is highest when $n$ is large?}
\vspace{-4.5em}
\begin{flushright}
\begin{tikzpicture}
\node [anchor=west] at (1.2,.7) {\bf\scriptsize{61\%}};
\draw [fill=ColorGray] (0,0) rectangle (2,.5);
\draw [fill=ColorGreen] (0,0) rectangle (1.2,.5);
\end{tikzpicture}
\end{flushright}

Our goal now is to compare the heights of the peak of each component $p_{L_t}$, $t=1, \dots, 2^k$  (for the non-trivial cases). To clearly visualize this, we overlay the plots of $p_{[1,1]}$, $p_{[2,1]}$, $p_{[1,2]}$, $p_{[2,2]}$ on the same axis in Figure \ref{fig:overlayHistK2Peak}. Here, we fix $n=100$ and vary $i$ from 10, 20, 50, and 91. We give an example of $i=91$ in order to emphasize the fact that the cases $i=10$ and $i=100-10+1=91$  are mirror images of each other along the vertical direction. The cases when $i=10$ and $20$ lead to the trivial case of $p_{[1,1]}$.

\begin{figure}[ht]
\centering
\includegraphics[scale=0.13]{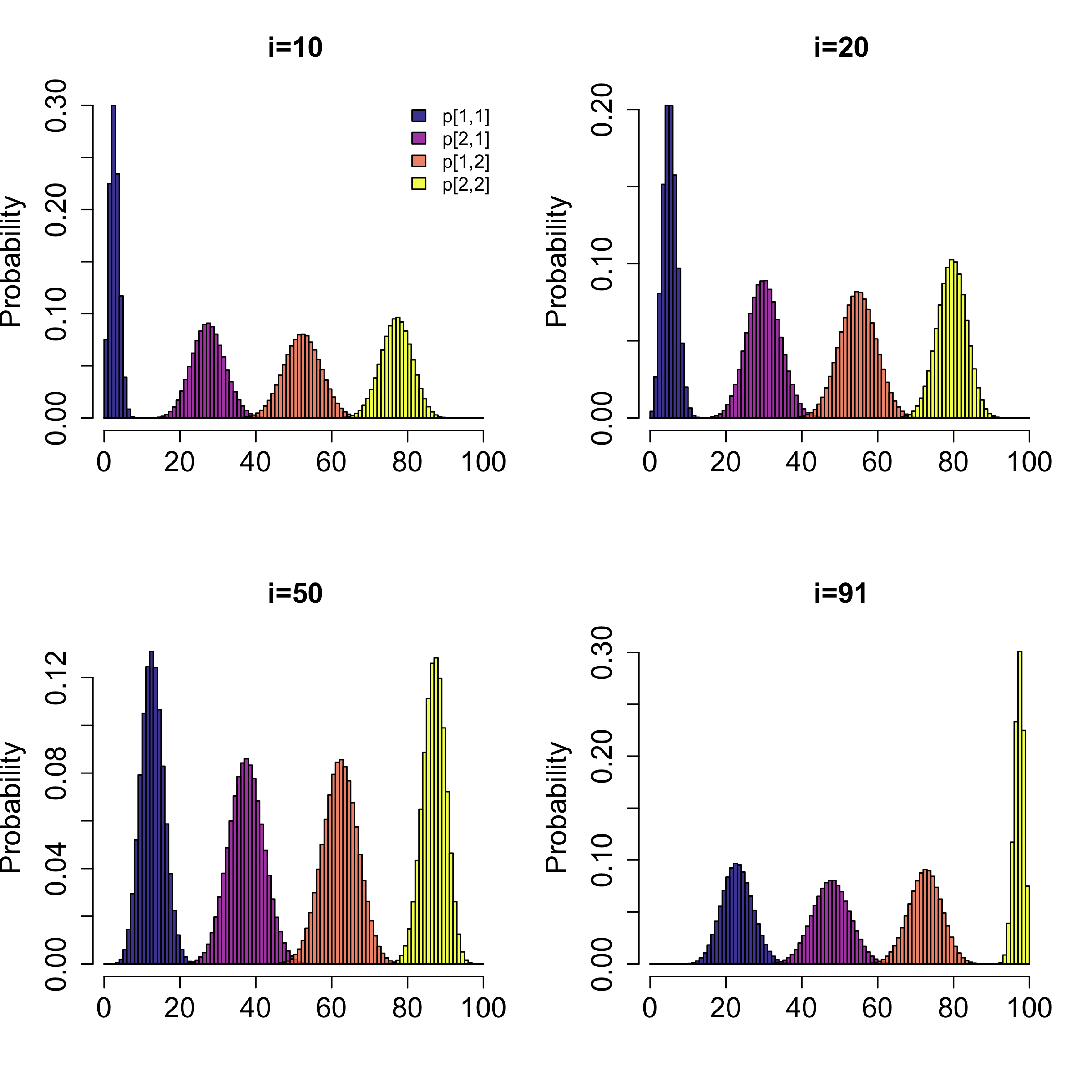}
\caption{\label{fig:overlayHistK2Peak}Probability mass functions $p_{[1,1]}$, $p_{[2,1]}$, $p_{[1,2]}$, $p_{[2,2]}$ for $n=100$, and $i$ varies from 10, 20, 50, and 91. Notice that $i=10$ and $i=91$ are mirror images along the vertical direction.}
\end{figure}

\textbf{The smaller the standard deviation, the higher the peak}

Recall the standard deviation $\sigma_t$ of each $p_{L_t}$ given in Corollary \ref{cor:CLT}, and the fact that the component $p_{L_t}$ with smallest standard deviation will have the highest peak. By comparing the values of $\sigma_t$, we obtain the following proposition which can be used to identify the component $p_{L^*}$ which achieves the highest peak among components $p_{L_t}$, $t=1, \dots, 2^k $.

\begin{prop}
\label{prop:highest peak} Let $n, i, k$ be fixed. For  $t=1, \dots, 2^k $, the height of the peak of $p_{L_t}$ (when $n$ is large) is approximately equal to $\dfrac{1}{\sigma_t\sqrt{2\pi}}$, where 
\begin{equation}
\sigma_t=  \sqrt{(n-i)\left(\dfrac{t-1}{2^k}\right)\left(1-\dfrac{t-1}{2^k}\right)+(i-1)\left(\dfrac{t}{2^k}\right)\left(1-\dfrac{t}{2^k}\right)}.
\end{equation}

Denote by $p_{L^*}$ the component whose peak is highest among $p_{L_t}$, $t=1, \dots, 2^k$. Then, 
\begin{equation}
p_{L^{*}}=\begin{cases}
\begin{array}{c}
p_{[1,1,\dots,1]}\\
p_{[2,2,\dots,2]}
\end{array} & \begin{array}{c}
\text{if i \ensuremath{\leq\left\lfloor \dfrac{(n+1)}{2}\right\rfloor }}\\
\text{otherwise.}
\end{array}\end{cases}
\end{equation}
\end{prop}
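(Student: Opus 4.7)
The first claim of the proposition — that the peak of $p_{L_t}$ has approximate height $1/(\sigma_t\sqrt{2\pi})$ — is immediate from Corollary \ref{cor:CLT}: in the non-trivial regime, $p_{L_t}$ is approximately $N(\mu_t,\sigma_t^2)$, whose maximal density value is $1/(\sigma_t\sqrt{2\pi})$. Ranking peak heights is therefore, up to lower-order corrections, the same as ranking the $\sigma_t$'s in \emph{increasing} order.

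\textbf{Minimizing the variance.} The plan is to show that $\sigma_t^2$, as a function of $t \in \{1,\dots,2^k\}$, is always minimized at one of the two extreme indices. Writing $p = (t-1)/2^k$ and $q = p + 2^{-k}$, we have
\[
\sigma_t^2 = (n-i)\, p(1-p) + (i-1)\, q(1-q).
\]
At $t = 1$ the first summand vanishes and at $t = 2^k$ the second vanishes; my goal is to rule out any interior minimum. For any interior $2 \leq t \leq 2^k - 1$ (relevant only when $k \geq 2$), both $p$ and $q$ lie in $[2^{-k},\,1-2^{-k}]$, and the concavity of $x(1-x)$ on $[0,1]$ — together with a quick check at the subinterval endpoints $2^{-k}$ and $1 - 2\cdot 2^{-k}$ — yields
\[
p(1-p) \geq 2^{-k}(1-2^{-k}), \qquad q(1-q) \geq 2^{-k}(1-2^{-k}).
\]
Using the second bound while discarding the first summand gives $\sigma_t^2 \geq (i-1)\cdot 2^{-k}(1-2^{-k}) = \sigma_1^2$, and using the first bound while discarding the second gives $\sigma_t^2 \geq (n-i)\cdot 2^{-k}(1-2^{-k}) = \sigma_{2^k}^2$. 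Hence $\sigma_t^2 \geq \max(\sigma_1^2,\sigma_{2^k}^2)$ for every interior $t$, and the minimum must sit at $t = 1$ or $t = 2^k$.

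\textbf{Endpoint comparison.} A direct comparison of the two candidates gives
\[
\sigma_1^2 \leq \sigma_{2^k}^2 \iff i - 1 \leq n - i \iff i \leq \tfrac{n+1}{2} \iff i \leq \lfloor (n+1)/2 \rfloor,
\]
where the last equivalence uses integrality of $i$. Since the leftmost and rightmost leaves of the bisection tree of Theorem \ref{thm:generating functions} are $L_1 = [1,1,\dots,1]$ (with $g(0,2^{-k})$) and $L_{2^k} = [2,2,\dots,2]$ (with $g(1-2^{-k},1)$), this is exactly the case split in the statement.

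\textbf{Expected main obstacle.} The delicate point is that $L_1$ and $L_{2^k}$ are precisely the indices for which the Lyapunov CLT can fail (the trivial regime, with $i$ or $n-i$ small), so the two components we are about to declare optimal are themselves not approximable by Gaussians — yet the peak-height comparison was phrased in terms of normal densities. One must therefore verify that in the trivial regime the exact shifted-binomial peak height is still correctly ordered against the CLT-approximated heights of the interior components. A local de Moivre--Laplace / Stirling estimate of the binomial p.m.f.\ at its mode, producing a peak of order $\Theta(1/\sigma_t)$, closes this gap and preserves the variance-based ranking used above; in the genuinely small-$i$ or small-$(n-i)$ cases the peak is in fact close to $1$, which trivially dominates every interior peak of order $1/\sqrt{n}$.
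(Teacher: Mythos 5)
Your proposal is correct and follows essentially the same route as the paper: identify the peak height of each (approximately normal) component as $1/(\sigma_t\sqrt{2\pi})$, use the concavity of $p(1-p)$ to force the minimizer of $\sigma_t^2$ to an extreme index $t\in\{1,2^k\}$, and then compare $\sigma_1^2=(i-1)2^{-k}(1-2^{-k})$ with $\sigma_{2^k}^2=(n-i)2^{-k}(1-2^{-k})$ to get the case split at $i\leq\lfloor(n+1)/2\rfloor$. You simply make explicit two points the paper treats tersely or defers: the quantitative bound $\sigma_t^2\geq\max(\sigma_1^2,\sigma_{2^k}^2)$ for interior $t$, and the check that the ranking survives in the trivial (shifted-binomial) regime, which the paper handles in its separate ``Revisit and settle the trivial cases'' discussion.
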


\begin{proof}
The height $\dfrac{1}{\sigma_t\sqrt{2\pi}}$ of the peak is due to the bell-shaped curve normal distribution. Using the fact that $f(p):=p(1-p)$ is unimodal and attains a minimum value at the boundary points, it is clear that if $i\leq\left\lfloor \dfrac{(n+1)}{2}\right\rfloor$, $\sigma_t$ is smallest when $t=1$. Hence, $p_{L^*}=p_{[1,1,\dots,1]}$ in this case. The result of the other case is justified in the same way.
\end{proof}

\section{Almost non-overlapping $p_{L_1}, p_{L_2}, \dots, p_{L_{2^k}}$}

\vspace{-4.5em}
\begin{flushright}
\begin{tikzpicture}
\node [anchor=west] at (1.2,.7) {\bf\scriptsize{79\%}};
\draw [fill=ColorGray] (0,0) rectangle (2,.5);
\draw [fill=ColorGreen] (0,0) rectangle (1.5,.5);
\end{tikzpicture}
\end{flushright}

Recall that our ultimate goal is to determine for each $i$, the index  $j^*_i$ that maximizes the mixture distribution   
\[
\label{eq:ak_decompose}
a_{i,j}^{(k)} = \dfrac{1}{2^k} \sum_{t=1}^{2^k} p_{L_t}(j).
\]

Thus far, we were able to show that, as $n$ is large, the mixture components $p_{L_t}$ are each normal distribution, and hence $a_{i,j}^{(k)}$ has the Gaussian mixture distribution. We have also identified the component $p_{L_t^*}$ whose peak is highest. Now, since the peaks of any two consecutive components  $p_{L_t}$ and $p_{L_{t+1}}$ are $\dfrac{n-1}{2^k}$ apart (easily verified by taking $\mu_{t+1}-\mu_t$), while the standard deviation of each $p_{L_t}$ is of order $\mathcal{O}(\sqrt{n})$, we can be sure that, as $n \to \infty$, the Gaussian components $p_{L_1},\dots, p_{L_{2^k}}$ are hardly overlapping.

\newpage
{\bf Maximum probablity $a_{i,j^*}^{(k)}$ is found!}

The fact that the components $p_{L_1}, p_{L_2}, \dots, p_{L_{2^k}}$ are (almost) non-overlapping (i.e. well-separated) when $n$ is large is very crucial and tremendously simplifies the problem of locating the maximizer $j^*_i$ of $ a_{i,j}^{(k)}$. In particular, the location of the highest peak $p_{L^*}$, identified previously in Proposition \ref{prop:highest peak}, is the maximizer we are seeking.

To see this, Figure \ref{fig:overlayHistK3} ($k=3$, $i=15$) illustrates the p.m.f of components $p_{L_1}, p_{L_2}, \dots, p_{L_{8}}$, and the superposition of the mixture distribution (the red line). We do not assign the weight of 1/8 to each component for the sake of clear visualization of the mixture. The location of the peak of the red line is the maximizer $j_i^*$. As can be seen from the figure,  when $n$ is small (top), the location of $j^*_i=3$ (red asterisk) does not  coincide with the location of the highest peak of $p_{L_1}$ (blue asterisk, at $j=2$). On the other hand, as $n$ becomes larger (bottom), the almost non-overlapping phenomenon occurs: while the peak of $p_{L_1}$ remains at the same height, the peaks of other components $p_{L_t}, t>1$, get lower and further apart  (remember the standard deviation is of order $\mathcal{O}(\sqrt{n})$).  Thus, the location of the maximizer $j^*_i$ coincides with the location of the peak of the component $p_{L_1}$ whenever $n$ is sufficiently large. 

\begin{rem}
We actually do not require $n$ to be that close to infinity. The components $p_{L_t}$ can still be overlapping, as long as there is no shift in the location of the maximum peak in the mixture model, and that is all we need. For example, we can see from the middle plot of Figure \ref{fig:overlayHistK3} that we already get the desired outcome when $n$ is as small as 50.
\end{rem}


\begin{figure}[h!]
\centering
\includegraphics[scale=0.23]{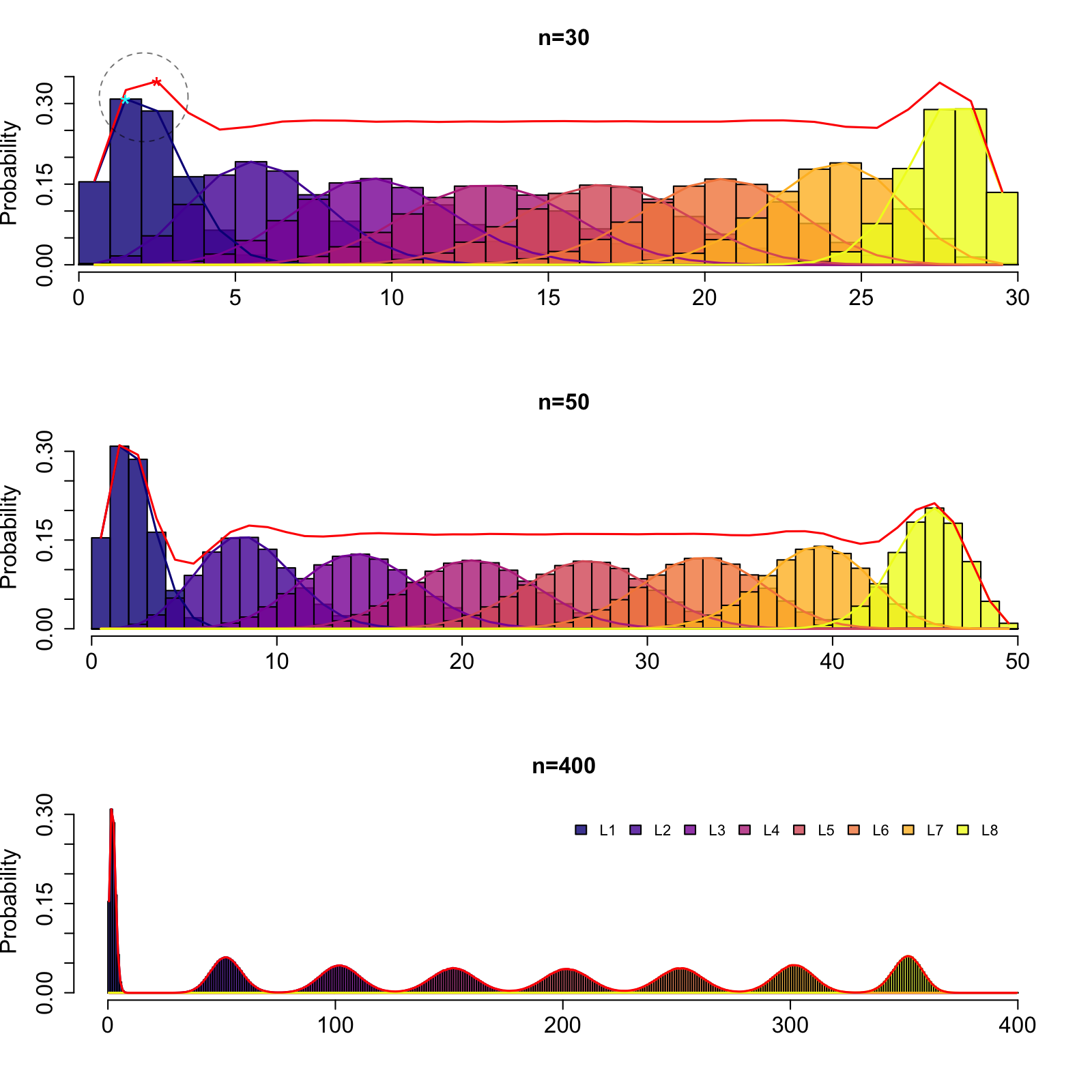}
\caption{\label{fig:overlayHistK3} Overlay probability mass functions $p_{L_1}$-$p_{L_8}$ for $k=3$ shuffles and $i=15$. The red line is the superposition of the mixture distribution (without the weight 1/8.) {\bf Top:} when $n$ is small, the location of $j^*_i$ does not necessary coincide with the location of the peak of $p_{L_1}$. {\bf Middle and Bottom:} when $n$ is large enough, the location of $j^*_i$  coincides with the location of the peak of $p_{L_1}$. }
\end{figure}

{\bf Revisit and settle the trivial cases}

Before we end this section, let us now settle the trivial cases. In fact, everything we have discussed so far applies to the trivial cases.  In particular the result regarding $p_{L^*}$ in Proposition \ref{prop:highest peak} still holds true. Consider for example the trivial case of $p_{[1,1,\dots,1]}$ when $i$ is small. Being a shifted Binomial with small and constant standard deviation $\sigma_1$, the peak of $p_{[1,1,\dots,1]}$ will indeed be highest.  Figure \ref{fig:overlayHistK3} (when $i=15$) also highlights the fact that $p_{[1,1,\dots,1]}$ does not depend on $n$. In particular, while the peaks of the other $p_{L_t}, t>1$ get lower as $n$ increases from $30, 50,400$, the peak of $p_{[1,1,\dots,1]}$ remains at the same height (and same location of course). 
Proceeding in the same manner,  the trivial case of $p_{[2,2,\dots,2]}$ when $n-i$ is small will achieve the highest peak, the result consistent with the second case of Proposition \ref{prop:highest peak}.

\section{Game solved!}
\vspace{-4.5em}
\begin{flushright}
\begin{tikzpicture}
\node [anchor=west] at (1.1,.7) {\bf\scriptsize{100\%}};
\draw [fill=ColorGray] (0,0) rectangle (2,.5);
\draw [fill=ColorGreen] (0,0) rectangle (2,.5);
\end{tikzpicture}
\end{flushright}

Now that we have done all the hard work, we are ready to wrap things up.  
The maximum probablity of $a_{i,j}^{(k)}$ is contributed
solely from the peak of $p_{[1,1,...,1]}$ if $i \leq 
\lf \dfrac{(n+1)}{2} \rf$
or $p_{[2,2,...,2]}$ otherwise.
Following immediately from the location of the mode of the shifted Binomial, $p_{[1,1,...,1]}$ and $p_{[2,2,...,2]}$ as given in Equations \ref{eq:mode1}-\ref{eq:mode2} and Proposition \ref{prop:highest peak},  the optimal strategy $\cal{G}^*$ can now be determined.


\vspace{-1em}
\begin{flushleft}
\shadowbox{\begin{minipage}[t]{1\columnwidth}%
{\bf \textsc{Optimal guessing strategy $\cal{G}^*$ for the $i$th position}}\\
\begin{itemize}
\item If $i \leq \lf \dfrac{(n+1)}{2}\rf$, the best guess is the number $j^*_i = \lf \dfrac{i}{2^k} \rf + 1$; 
\item If $i \geq \lc \dfrac{(n+1)}{2}\rc$, the best guess is the number $j^*_i = \lf (n-i+1)\left(1-\dfrac{1}{2^k}\right) \rf + i.$
\end{itemize}
\end{minipage}}
\par\end{flushleft}

That is, for a $k$-time shuffled deck of $n$ cards, one should guess the top half of the deck with sequence 
\[
\underbrace{1,\dots, 1}_{2^k-1\text{ times}}, \;\underbrace{2,\dots, 2}_{2^k\text{ times}}, \;\underbrace{3,\dots, 3}_{2^k\text{ times}}, \;\underbrace{4,\dots, 4}_{2^k\text{ times}}, \dots  
\]
and guess the bottom half in the reverse manner, i.e. 
\[
\dots, \underbrace{n-3,\dots, n-3}_{2^k\text{ times}}, \;\underbrace{n-2,\dots, n-2}_{2^k\text{ times}},\; \underbrace{n-1,\dots, n-1}_{2^k\text{ times}}, \;\underbrace{n,\dots, n}_{2^k-1\text{ times}}. 
\]

Using this strategy, we can calculate the expected number of correct guesses for the whole deck leading to the main result of this paper. It is worth noting that the expected value relies only on the standard deviation of the component $p_{[1,1,...,1]}$ of the first half of the deck, as we shall now see.

\shadowbox{\begin{minipage}[t]{1\columnwidth}%
\begin{thm}
\label{thm:main}
For an $n$-card deck, let $X^{\cal{G}^*}$ be the number of correct guesses (according to the optimal strategy $\cal{G}^*$) after $k$-riffle shuffles. 
Then,
\[ E\left[X^{\cal{G}^*}\right] = \dfrac{2\sqrt{n}}{\sqrt{(2^k-1)\pi}} +\mathcal{O}(1).\]
\end{thm}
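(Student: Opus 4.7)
The plan is to substitute the asymptotics developed in Sections 3--5 into
\[
E\!\left[X^{\cal{G}^*}\right] \;=\; \sum_{i=1}^{n} a_{i,j_i^*}^{(k)}
\]
and evaluate the resulting sum. First I would exploit the symmetry $a(i,j,n) = a(n+1-i,n+1-j,n)$ from Equation~\eqref{eq:sym}, which lifts to $a_{i,j}^{(k)}$ by an easy induction on $k$. Consequently the contributions from $i \leq \lfloor(n+1)/2\rfloor$ and $i \geq \lceil(n+1)/2\rceil$ agree, and it suffices to estimate the sum over the first half of the deck and double it.

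Fix $i$ in the first half. The almost non-overlapping property of Section~5 together with Proposition~\ref{prop:highest peak} yields the decomposition
\[
a_{i,j_i^*}^{(k)} \;=\; \tfrac{1}{2^k}\, p_{[1,1,\dots,1]}(j_i^*) \;+\; \tfrac{1}{2^k}\sum_{t=2}^{2^k} p_{L_t}(j_i^*),
\]
where the second sum is negligible: each $p_{L_t}$ with $t\geq 2$ has mean at distance $\Theta(n/2^k)$ from $j_i^*$ while its standard deviation is only $\mathcal{O}(\sqrt{n})$, so the Gaussian tail gives $p_{L_t}(j_i^*) = o(n^{-1/2})$ uniformly in $i$. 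For the dominant term, $p_{[1,1,\dots,1]}$ is the shifted binomial $B(i-1,1/2^k)+1$, and its modal probability equals, by Stirling (equivalently, the local de~Moivre--Laplace theorem),
\[
\max_j p_{[1,1,\dots,1]}(j) \;=\; \frac{1}{\sigma_1\sqrt{2\pi}}\bigl(1+o(1)\bigr) \;=\; \frac{2^k}{\sqrt{2\pi(i-1)(2^k-1)}}\bigl(1+o(1)\bigr),
\]
since $\sigma_1^2 = (i-1)(2^k-1)/2^{2k}$ by Corollary~\ref{cor:CLT}. Dividing by $2^k$ gives $a_{i,j_i^*}^{(k)} \sim 1/\sqrt{2\pi(2^k-1)(i-1)}$.

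Summing over $i$ in the first half and applying the elementary estimate $\sum_{m=1}^{M} m^{-1/2} = 2\sqrt{M}+\mathcal{O}(1)$,
\[
\sum_{i=2}^{\lfloor(n+1)/2\rfloor}\frac{1}{\sqrt{2\pi(2^k-1)(i-1)}} \;=\; \frac{2\sqrt{(n-1)/2}}{\sqrt{2\pi(2^k-1)}}+\mathcal{O}(1) \;=\; \frac{\sqrt{n}}{\sqrt{\pi(2^k-1)}}+\mathcal{O}(1).
\]
Doubling by symmetry produces $2\sqrt{n}/\sqrt{(2^k-1)\pi}+\mathcal{O}(1)$, matching the claim.

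The main obstacle is uniform control of the two approximations. The local CLT estimate for the binomial mode degrades when $i-1$ is not much larger than $2^k$, but the small-index regime $i \leq C \cdot 2^k$ contributes only $\mathcal{O}(1)$ in total because each $a_{i,j_i^*}^{(k)} \leq 1$ and only boundedly many indices are involved. For the overlap, a uniform Chernoff (or Gaussian-tail) bound on $p_{L_t}(j_i^*)$ for $t \geq 2$ is required: the $\Theta(n/2^k)$ separation between the means and the $\mathcal{O}(\sqrt{n})$ spread forces each such term to be exponentially small in $n$, so the total overlap contribution is $o(1)$ after summing over $i$. This simultaneously makes rigorous the ``almost non-overlapping'' heuristic of Section~5 and confirms that only the dominant component $p_{[1,1,\dots,1]}$ (or $p_{[2,2,\dots,2]}$, on the second half) drives the leading-order asymptotic.
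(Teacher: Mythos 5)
Your proposal follows essentially the same route as the paper's own proof: exploit the symmetry of $P^{(k)}$ to restrict to the first half of the deck, approximate $a_{i,j_i^*}^{(k)}$ by $\tfrac{1}{2^k}\cdot\tfrac{1}{\sigma_1\sqrt{2\pi}}$ with $\sigma_1^2=(i-1)(2^k-1)/2^{2k}$, and sum $\sum(i-1)^{-1/2}$ to get the stated asymptotic. If anything you are more explicit than the paper about why the overlap terms $p_{L_t}(j_i^*)$ for $t\geq 2$ and the small-$i$ regime only contribute $\mathcal{O}(1)$ (though to literally land the $\mathcal{O}(1)$ error you should upgrade the $(1+o(1))$ in the local limit step to $(1+\mathcal{O}(1/i))$ so the accumulated error sums to a constant rather than $o(\sqrt{n})$).
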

\end{minipage}}

\begin{proof}
Recall Equation \ref{eq: EX}:
$ E\left[X^{\cal{G}^*}\right] = \sum_{i=1}^n a_{i,j^*}^{(k)} =  \dfrac{1}{2^k}\sum_{i=1}^n p_L(j^*_i), $
and the standard deviation of $p_{[1,1,...,1]}$:
$\sigma_1=  \sqrt{(i-1)\left(\dfrac{1}{2^k}\right)\left(1-\dfrac{1}{2^k}\right)}$.
By the symmetric property of the probability matrix $P^{(k)}$, we will consider only half of the deck and double the value of the summation. For an asymptotic value $n$, we have (from Proposition \ref{prop:highest peak}):
\begin{align*} 
E\left[X^{\cal{G}^*}\right] &= 2\sum_{i=2}^{\lf\frac{(n+1)}{2}\rf} \frac{1}{\sqrt{(i-1)(2^k-1)}\sqrt{2\pi}}+\mathcal{O}(1)\\
&= \dfrac{2}{\sqrt{(2^k-1)2\pi}}2\sqrt{\dfrac{n}{2}} +\mathcal{O}(1) \\
&= \dfrac{2\sqrt{n}}{\sqrt{(2^k-1)\pi}}+\mathcal{O}(1).
\end{align*}

Let us clarify the approximation error terms absorbed in $\mathcal{O}(1)$. Specifically, this includes (i) the error in normal approximations, when $i$ is small. The normal distribution gives a less accurate approximation, say, for the first $N(k)$ terms (finite number of terms for a given $k$) of the sum, after which the approximation becomes more and more accurate. The total error in this case is of order $\mathcal{O}(1)$;  (ii) the approximation errors due to the CLT (i.e. the normal approximation to the shifted Binomial). These approximation errors are asymptotically negligible compared to that in (i); and finally (iii) the errors of order $\mathcal{O}(1)$ due to approximation of a summation by an integral (in the second equality).
\end{proof}

From the theorem, notice that for each additional shuffle, the expected number of correct guesses will go down by roughly a factor of $\sqrt{2}$
(in a huge deck of $n$ cards, of course).
We also remark that for $k=1$, the result boils down to that of \cite{C}.

\vspace{1em}
\begin{center}
{\textsc{\large Game Complete!}}\\ \vspace{-0.6em}
\Huge{\Stars{5}}
\end{center}


\end{document}